\newtheorem{theorem}{Theorem}[section]
\newtheorem{corollary}[theorem]{Corollary}
\newtheorem{proposition}[theorem]{Proposition}
\numberwithin{equation}{section}
\theoremstyle{definition}
\newtheorem*{example*}{Example}
\newtheorem{example}[theorem]{Example}
\newtheorem{remark}[theorem]{Remark}
\newtheorem*{remark*}{Remark}
\newcommand\frsl{\mathfrak{sl}}
\newcommand\frgl{\mathfrak{gl}}
\newcommand\frso{\mathfrak{so}}
\newcommand{\calL}{\mathcal{L}}
\newcommand{\calT}{\mathcal{T}}
\newcommand{\calK}{\mathcal{K}}
\newcommand{\frg}{\mathfrak{g}}
\newcommand{\frd}{\mathfrak{d}}
\newcommand{\bF}{\mathbb{F}}
\newcommand{\bZ}{\mathbb{Z}}
\newcommand{\bH}{\mathbb{H}}
\newcommand{\subo}{_{\bar 0}}
\newcommand{\subuno}{_{\bar 1}}
\DeclareMathOperator{\Der}{\mathrm{Der}}
\DeclareMathOperator{\Inder}{\mathrm{Inder}}
\DeclareMathOperator{\Aut}{\mathrm{Aut}}
\DeclareMathOperator{\End}{\mathrm{End}}
\DeclareMathOperator{\Mat}{\mathrm{Mat}} \DeclareMathOperator{\trace}{trace}
\def\hregleta{\hrule height .5pt}
\def\hreglon{\hrule height1pt}
\def\vreglon{\vrule height 12pt width1pt depth 4pt}
\def\vregleta{\vrule width .5pt}
\begin{document}

\title{Derivations of the Cheng-Kac Jordan superalgebras}


\author[Elisabete Barreiro]{Elisabete Barreiro$^{\star}$}
\thanks{$^{\star}$ Supported by the Center of Mathematics of the
University of Coimbra/FCT}
\address{CMUC, Departamento de Matem\'atica, Universidade de Coimbra,
3001-454 Coimbra, Portugal} \email{mefb@mat.uc.pt}

\author[Alberto Elduque]{Alberto Elduque$^{\star\star}$}
\thanks{$^{\star\star}$ Supported by the Spanish
Ministerio de Educaci\'{o}n y Ciencia and FEDER (MTM 2007-67884-C04-02) and by
the Diputaci\'on General de Arag\'on (Grupo de Investigaci\'on de \'Algebra)}
\address{Departamento de Matem\'aticas e
Instituto Universitario de Matem\'aticas y Aplicaciones, Universidad de
Zaragoza, 50009 Zaragoza, Spain} \email{elduque@unizar.es}

\author[Consuelo Mart\'{\i}nez]{Consuelo Mart\'{\i}nez$^{\star\star\star}$}
\thanks{$^{\star\star\star}$ Supported by the Spanish
Ministerio de Educaci\'{o}n y Ciencia and FEDER (MTM 2007-67884-C04-01) and the Principado de Asturias (IB-08-147)}
\address{Departamento de Matem\'aticas, Universidad de Oviedo, 33007 Oviedo,
Spain}
\email{chelo@orion.ciencias.uniovi.es}


\begin{abstract}
The derivations of the Cheng-Kac Jordan superalgebras are studied. It is shown
that, assuming $-1$ is a square in the ground field, the Lie superalgebra of
derivations of a Cheng-Kac Jordan superalgebra is  isomorphic to the Lie
superalgebra obtained from a simpler Jordan superalgebra (a Kantor double
superalgebra of vector type) by means of the Tits-Kantor-Koecher construction.
This is done by exploiting the $S_4$-symmetry of the Cheng-Kac Jordan
superalgebra.
\end{abstract}

\maketitle


\section{Introduction}

Cheng-Kac superalgebras appeared for the first time in \cite{CK97}, where Cheng
and Kac introduced a new superconformal algebra that they denoted as $CK_6$ in
connection with the classification of the simple conformal Lie superalgebras of
finite type.

This superalgebra was considered independently in \cite{GLS01}.

The general construction, in a Jordan context, appears in the classification of
simple finite dimensional unital Jordan superalgebras with a non-semisimple
even part \cite{MZ01}. The construction involves an arbitrary associative
commutative superalgebra $Z$,  and an even derivation $\delta:Z \rightarrow Z$.
In order to get simple superalgebras, $Z$ must be $\delta$-simple, that is, it
does not contain any proper $\delta$-invariant ideal.  We will denote by
$J=JCK(Z,\delta)$  the Jordan superalgebra obtained in this way, while
$CK(Z,\delta)$ will denote the corresponding centerless Lie superalgebra
obtained via the Tits-Kantor-Koecher process.

If we consider $Z = F[t,t^{-1}]$ the algebra of Laurent polynomials and $\delta
= \frac{d\ }{dt}$ the usual derivative, then $CK(Z,\delta)$ gives the
superconformal algebra $CK_6$ found by Cheng and Kac.

The Jordan Cheng-Kac superalgebra is a free module of dimension 8 over $Z$, and
hence, if the dimension of $Z$ over the ground field $\bF$ is finite, the
dimension of $JCK(Z,\delta)$ over the ground field is $8\dim_{\bF}Z$. However,
the dimension of the Lie superalgebra $CK(Z,\delta)$ is not so obvious.

In \cite{Bar10}, the first author studies the dimension of the  Cheng-Kac Lie
superalgebra in the case that $Z$ is the algebra of truncated polynomials over
the prime field in characteristic $3$ and $\delta$ is the usual derivation.

But her results on the dimension of the Cheng-Kac superalgebra extend to the
general case. The Lie superalgebra of derivations of the Jordan superalgebra
$JCK(Z,\delta)$ will be described explicitly here, as well as its subalgebra of
inner derivations. In doing so, the Kantor double superalgebra of vector type
$Z\oplus Zx$ and its derivation algebra will play an essential role.

Moreover, these results will be used to prove that the Lie superalgebra of
derivations of $JCK(Z,\delta)$, or its subalgebra of inner derivations, can be
obtained through Tits version of the Tits-Kantor-Koecher construction applied
to the subalgebra $Z\oplus Zx$ of $JCK(Z,\delta)$.  This will be done by
exploiting a symmetry over the symmetric group of degree $4$ of the Cheng-Kac
Jordan superalgebra.


\section{Preliminaries}

Let $Z$ be a unital commutative and associative algebra over a field $\bF$ of
characteristic $\ne 2$. Let $\delta\in\Der Z$ be a derivation such that
$Z\delta(Z)=Z$. In particular, this is the situation when $\delta\ne 0$ and $Z$
is $\delta$-simple.

The \emph{Cheng-Kac Jordan superalgebra} $J=JCK(Z,\delta)$ is the Jordan
superalgebra $J=J\subo\oplus J\subuno$ where both $J\subo$ and $J\subuno$ are
free $Z$-modules of rank $4$: $J\subo=Z1\oplus(\oplus_{i=1}^3 Zw_i)$,
$J\subuno=Zx\oplus(\oplus_{i=1}^3Zx_i)$, $J\subo$ is a unital Jordan algebra
over $Z$, and it is isomorphic to the scalar extension $\bigl(\bF
1\oplus(\oplus_{i=1}^3\bF w_i)\bigr)\otimes_\bF Z$ of the Jordan algebra over
$\bF$ of a quadratic form with $w_1^2=w_2^2=1=-w_3^2$ and $w_iw_j=0$ for $i\ne
j$. The multiplication of even and odd elements and of odd elements is given by
the following tables:

\[
\vbox{\offinterlineskip \halign{\hfil$#$\enspace\hfil&#\vreglon
 &\hfil\enspace$#$\enspace\hfil&#\vregleta
 &\hfil\enspace$#$\enspace\hfil\cr
 J\subo J\subuno&&gx&&gx_j\cr
 \noalign{\hreglon}
 f&&(fg)x&&(fg)x_j\cr
 \noalign{\hregleta}
 fw_i&&\bigl(\delta(f)g\bigr)x_i&&-(fg)x_{i\times j}\cr
 }}
\qquad\qquad \vbox{\offinterlineskip \halign{\hfil$#$\enspace\hfil&#\vreglon
 &\hfil\enspace$#$\enspace\hfil&#\vregleta
 &\hfil\enspace$#$\enspace\hfil\cr
 J\subuno J\subuno&&gx&&gx_j\cr
 \noalign{\hreglon}
 fx&&\delta(f)g-f\delta(g)&&-(fg)w_j\cr
 \noalign{\hregleta}
 fx_i&&(fg)w_i&&0\cr
 }}
\]
\smallskip

\noindent where $x_{1\times 2}=-x_{2\times 1}=x_3$, $x_{1\times 3}=-x_{3\times
1}=x_2$, $x_{3\times 2}=-x_{2\times 3}=x_1$ and $x_{i\times i}=0$ for any
$i=1,2,3$.

Let us list some easy properties of $J$:

\begin{proposition}\label{pr:easyproperties}
Under the conditions above:
\begin{enumerate}
\item $Z$\,($=Z1$) is the center $Z(J\subo)$ of $J\subo$ (that is, the
    elements which commute and associate with any other elements, the
    commutativity being trivial here).

\item $J\subuno^2=J\subo$, as $(Zx)x=\delta(Z)$ is in $J\subuno^2$, and
    $(fx)(gx)=\delta(f)g-f\delta(g)=\delta(fg)-2f\delta(g)$, so that
    $f\delta(g)$ is in $J\subuno^2$, and hence $Z=Z\delta(Z)$ is contained
    in $J\subuno^2$. (Obviously $Zw_i$ is contained in $J\subuno^2$ for any
    $i=1,2,3$.)\newline
    Therefore any derivation (in the super sense) of $J$ is determined by
    its action on $J\subuno$, and hence by its action on $Z$, $x$, $x_1$,
    $x_2$ and $x_3$.

\item $J$ is $\bZ_2^2$-graded with
\begin{equation}\label{eq:Z22grading}
\begin{split}
J^{[\bar 0,\bar 0]}&=Z\oplus Zx,\\ J^{[\bar 1,\bar 0]}&=Zw_1\oplus Zx_1,\\
J^{[\bar 0,\bar 1]}&=Zw_2\oplus Zx_2,\\ J^{[\bar 1,\bar 1]}&=Zw_3\oplus
Zx_3.
\end{split}
\end{equation}

\item The subspace $Zx$ consists precisely of the elements $z\in J$ such
    that $zw_i=0$ for any $i=1,2,3$.
\end{enumerate}
\end{proposition}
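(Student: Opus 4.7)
The plan is to verify each of the four assertions directly from the multiplication tables, essentially independently, in the order listed.

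For (1), since $J\subo$ is a commutative Jordan algebra, membership in the center reduces to the associativity condition. Writing $z=a1+\sum_{i=1}^3 b_iw_i$, I would impose $(zw_j)w_j=z(w_jw_j)=\pm z$ and compute the left hand side using $w_iw_j=0$ for $i\neq j$ and $w_j^2=\pm 1$. For $j=1,2$ this already forces $b_2=b_3=0$ and $b_1=b_3=0$ respectively, and the remaining constraint kills $b_1$ (or $b_2$), leaving $z=a1\in Z$. The reverse inclusion is clear.

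For (2), I would read off the odd-odd table that $(1\cdot x)(gx)=-\delta(g)$, so $\delta(Z)\subseteq J\subuno^2$; then the identity $(fx)(gx)=\delta(fg)-2f\delta(g)$ (with char $\neq 2$) places $f\delta(g)$ in $J\subuno^2$, and the hypothesis $Z\delta(Z)=Z$ upgrades this to $Z\subseteq J\subuno^2$. The entries $(fx_i)(gx)=(fg)w_i$ cover the components $Zw_i$, so $J\subo\subseteq J\subuno^2$; the reverse inclusion is obvious from the parity of products. For the consequence on derivations, any (super)derivation $D$ satisfies the Leibniz rule on the product $J\subuno\cdot J\subuno=J\subo$, so $D|_{J\subo}$ is determined by $D|_{J\subuno}$; then $D(fx)=D(f)x+fD(x)$ and $D(fx_i)=D(f)x_i+fD(x_i)$ (since $f\in J\subo$) show that $D|_{J\subuno}$ in turn is determined by $D|_Z$ and the values $D(x),D(x_1),D(x_2),D(x_3)$.

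For (3), I would assign the grading so that $Z$ and $x$ lie in degree $(\bar 0,\bar 0)$ while for each $i$ the pair $\{w_i,x_i\}$ lies in a distinct nonzero degree of $\bZ_2^2$, chosen so that $(\deg w_1)+(\deg w_2)=\deg w_3$. A glance at the multiplication tables confirms the grading axioms: the relations $w_iw_j=0$ for $i\neq j$ vanish trivially in the correct component, while the products involving $x,x_i$ and the cross-products $x_{i\times j}$ carry exactly the degree dictated by the sum. For (4), one direction is immediate: $(1\cdot w_i)(gx)=\delta(1)gx_i=0$, so $Zx\cdot w_i=0$. Conversely, given $z=z\subo+z\subuno$ with $zw_i=0$ for all $i$, the even and odd parts must each annihilate every $w_i$; the even-part vanishing is handled exactly as in (1), while for $z\subuno=fx+\sum_j g_jx_j$ the computation $w_i z\subuno$ via the even-odd table produces an $\bF$-linear combination of the $g_j$'s as coefficients of the $x_j$'s, and running $i$ through $1,2,3$ forces $g_1=g_2=g_3=0$, leaving $z\subuno=fx\in Zx$.

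I do not expect any real obstacle; every step is a direct verification. The only point needing care is in (2), where the hypothesis $Z\delta(Z)=Z$ is essential to recover all of $Z$ (and not just $\delta(Z)$) inside $J\subuno^2$.
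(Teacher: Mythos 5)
Your proposal is correct and follows essentially the same route as the paper, which only gives inline justifications for these ``easy properties'': the argument for item (2) is verbatim the paper's (using $(fx)(gx)=\delta(fg)-2f\delta(g)$, characteristic $\ne 2$, and the hypothesis $Z=Z\delta(Z)$), and items (1), (3), (4) are the same direct verifications from the multiplication tables. The one point you rightly flag --- that $Z=Z\delta(Z)$ is needed to get all of $Z$, not just $\delta(Z)$, inside $J\subuno^2$ --- is exactly the point the paper emphasizes.
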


\bigskip

Our aim is the determination of the Lie superalgebra of derivations of the
Cheng-Kac Jordan superalgebra.

Also, assuming $-1\in\bF^2$, it will be proved that this Lie superalgebra is
isomorphic to the  Tits-Kantor-Koecher Lie superalgebra attached to the
subalgebra $Z\oplus Zx$ of $J$. This will be done by exploiting a symmetry over
the symmetric group of degree $4$ of the Cheng-Kac Jordan superalgebra.

\bigskip


\section{The superalgebra of derivations $\Der(Z\oplus Zx)$}

In order to compute the Lie superalgebra of derivations of a Cheng-Kac Jordan
superalgebra $J=JCK(Z,\delta)$, we will start by computing the derivations of
the Kantor double superalgebra of vector type $K=Z\oplus Zx$, which is a
subalgebra of $JCK(Z,\delta)$.

Given any derivation $\mu$ of the algebra $Z$ such that $[\mu,\delta]\in
Z\delta$, let $a\in Z$ be such that $[\mu,\delta]=2a\delta$. Then there is a
unique even derivation $\check\mu$ of $Z\oplus Zx$ such that
$\check\mu\vert_Z=\mu$ and $\check\mu(x)=ax$.

Also, for any $a\in Z$, there is a unique odd derivation $\eta_a$ such that
$\eta_a(Z)=0$, $\eta_a(x)=a$. Moreover, if the characteristic of $\bF$ is $3$,
then for any $\mu\in Z\delta$, there is a unique odd derivation $\mu^-$ such
that $\mu^-(z)=\mu(z)x$ for any $z\in Z$, and $\mu^-(x)=0$.

Recall that the Lie superalgebra $\Inder(J)$ of inner derivations of a Jordan
superalgebra $J$ is the linear span of the derivations $D(a,b): c\mapsto
a(bc)-(-1)^{\bar a\bar b}b(ac)$.  This is an ideal of the Lie superalgebra
$\Der(J)$ of derivations.

\begin{proposition}\label{pr:DerZZx}
Let $K$ be the Jordan superalgebra $Z\oplus Zx$ (a subalgebra of the Cheng-Kac
Jordan superalgebra $JCK(Z,\delta)$). Then:
\begin{itemize}
\item The assignment $\mu\mapsto \check\mu$ gives an isomorphism
    $\{\mu\in\Der(Z): [\mu,\delta]\in Z\delta\}\rightarrow \Der(K)\subo$.
    Under this isomorphism $\Inder(K)\subo$ corresponds to $Z\delta$ (which
    is linearly isomorphic to $Z$).
\item If the characteristic of $\bF$ is $\ne 3$, then
    $\Der(K)\subuno=\Inder(K)\subuno=\{\eta_a: a\in Z\}$, which is linearly
    isomorphic to $Z$. \item If the characteristic of $\bF$ is $3$, then
    $\Inder(K)\subuno=\{\eta_a: a\in Z\}$ and
    $\Der(K)\subuno=\Inder(K)\subuno\oplus \{\mu^-: \mu\in Z\delta\}$.
\end{itemize}
\end{proposition}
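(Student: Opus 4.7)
The plan is to exploit the grading $K = Z \oplus Zx$: every super-derivation $D$ of $K$ is determined by $D|_Z$ together with the single value $D(x)$, since $Zx = Z\cdot x$ and $Z\subseteq K$ is a subalgebra. For any candidate data, super-Leibniz applied to the two essentially nontrivial products, $f\cdot(gx)=(fg)x$ and $(fx)(gx)=\delta(f)g - f\delta(g)$, produces all the compatibility conditions, and the converse---that constrained data actually assembles into a derivation---is routine. Throughout, the standing hypothesis $Z\delta(Z)=Z$ forces $\ann_Z\bigl(\delta(Z)\bigr)=0$, which delivers the uniqueness and injectivity claims.

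For an even derivation, write $\mu := D|_Z \in \Der(Z)$ and $D(x)=ax$ for some $a\in Z$. Applying $D$ to $(fx)(gx)$ and expanding both sides by super-Leibniz collapses, after the obvious cancellations, to $[\mu,\delta](h)=2a\,\delta(h)$ for every $h\in Z$, which is exactly $[\mu,\delta]\in Z\delta$, and the $a$ so produced is unique. Conversely, one checks directly that $\check\mu$ defined by $\check\mu|_Z=\mu$ and $\check\mu(x)=ax$ is a derivation. For the inner part, $D(f,g)=0$ on $Z\times Z$ by commutativity, while a direct evaluation of $D(fx,gx)$ on elements of $Z$ and on $x$ identifies it with $\check\mu$ for $\mu = -2fg\delta$; as $fg$ ranges over $Z$, this identifies $\Inder(K)\subo$ with $Z\delta$, and the injectivity of $z\mapsto z\delta$ yields $Z\delta \cong Z$.

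The odd case is the delicate one. For an odd derivation, write $D(z)=\phi(z)x$ on $Z$ and $D(x)=b\in Z$; Leibniz on $Z\times Z$ forces $\phi\in\Der(Z)$, and then $D(fx)=\delta\phi(f)+fb$ is determined by the formula for $(hx)\cdot x$. Leibniz on $f\cdot(gx)$ now yields $2\phi(f)\delta(g)+\delta(f)\phi(g)=0$, while Leibniz on $(fx)(gx)$ yields $[\delta,\phi](f)g - f[\delta,\phi](g) = \delta(f)\phi(g)-\phi(f)\delta(g)$; setting $f=1$ in the latter forces $[\delta,\phi]=0$ and reduces it to the symmetry $\delta(f)\phi(g)=\phi(f)\delta(g)$. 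Combining the two relations gives $3\phi(f)\delta(g)=0$ for all $f,g$: in characteristic different from $3$, this forces $\phi(f)\in\ann\bigl(\delta(Z)\bigr)=0$, hence $\phi=0$ and $D=\eta_b$; in characteristic $3$ the factor $3$ vanishes, the symmetry together with $Z\delta(Z)=Z$ lets one write $\phi=c\delta$ uniquely, and one recovers the additional family $\mu^-=(c\delta)^-$.

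Inner odd derivations are spanned by those $D(a,b)$ with $a,b$ of opposite parity, and by the skew-supersymmetry of the bracket it suffices to take $a=f\in Z$, $b=gx$. A short evaluation gives $D(f,gx)|_Z=0$ and $D(f,gx)(x)=g\delta(f)$, so $D(f,gx)=\eta_{g\delta(f)}$; since $g\delta(f)$ spans $Z\delta(Z)=Z$, every $\eta_a$ is inner, and the $\mu^-$ derivations are therefore outer whenever they appear. The main technical obstacle is the odd case: one has to track the super-signs in the two Leibniz identities carefully and isolate the specific linear combination whose only obstruction to vanishing is the factor of $3$.
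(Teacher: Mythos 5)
Your proposal is correct and follows essentially the same route as the paper: reduce everything to the action on $Z$ and on $x$, extract the relation $[\mu,\delta]=2a\delta$ in the even case, derive the two identities $2\phi(f)\delta(g)+\delta(f)\phi(g)=0$ and $\delta(f)\phi(g)=\phi(f)\delta(g)$ in the odd case so that their combination $3\phi(f)\delta(g)=0$ splits the analysis by characteristic, and identify the inner derivations via $D(fx,gx)\vert_Z=-2fg\delta$ and $D(f,gx)=\eta_{g\delta(f)}$. The only cosmetic difference is that you carry $b=D(x)$ through the odd computation and observe that it cancels, where the paper first subtracts $\eta_b$ to normalize $D(x)=0$.
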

\begin{proof}
Let $\gamma$ be an even derivation of $K$, then $\mu=\gamma\vert_Z$ is a
derivation of $Z$, and $\gamma(x)\in Zx$, so there is an element $a\in Z$ such
that $\gamma(x)=ax$. For $f,g\in Z$ we get:
\[
\begin{split}
&\gamma(fx)=\mu(f)x+f(ax)=(\mu(f)+af)x,\\[6pt]
&\gamma((fx)(gx))=\gamma(\delta(f)g-f\delta(g))=\mu(\delta(f)g-f\delta(g)),\\[4pt]
&\gamma(fx)(gx)+(fx)\gamma(gx)=((\mu(f)+af)x)(gx)+(fx)(\mu(g)+ag)x)\\
&\qquad\qquad\qquad
=\bigl(\delta\mu(f)+\delta(fa)\bigr)g-(\mu(f)+fa)\delta(g)\\
&\qquad\qquad\qquad\qquad
+\delta(f)(\mu(g)+ga)-f\bigl(\delta\mu(g)+\delta(ga)\bigr).
\end{split}
\]
With $g=1$ we obtain
$\mu\delta(f)=\delta\mu(f)+\delta(fa)+\delta(f)a-f\delta(a)=\delta\mu(f)+2a\delta(f)$,
that is, $[\mu,\delta]=2a\delta$. Hence $\gamma$ is determined by $\mu$:
$\gamma=\check\mu$, which is easily seen with the computations above to be
indeed an even derivation. As for the inner even derivations, note that
$D(Z,Z)=0$, while for any $f,g\in Z$,
\[
\begin{split}D(fx,gx):h\mapsto&(fx)(hgx)+(gx)(hfx)\ \text{($fx$ and $gx$ are
odd)}\\
 &=\delta(f)hg-f\delta(hg)+\delta(g)hf-g\delta(hf)=-2fg\delta(h).
 \end{split}
\]
Hence the restriction to $Z$ of $D(fx,gx)$ is $-2fg\delta$, and this gives the
isomorphism $\Inder(K)\subo\cong Z\delta$.

For any $f,g\in Z$, $D(f,gx)(Z)=0$ and
$D(f,gx)(x)=f\delta(g)-\delta(g)f+g\delta(f)=g\delta(f)$. That is,
\begin{equation}\label{eq:DfgxK}
D(f,gx):Z\mapsto 0,\quad x\mapsto g\delta(f).
\end{equation}
As $Z=Z\delta(Z)$, this shows that $\Inder(K)\subuno=\{\eta_a: a\in Z\}$.

Now, given any $\eta\in\Der(K)\subuno$, let $a=\eta(x)$ and change $\eta$ by
$\eta-\eta_a$. Hence we may assume $\eta\in\Der(K)\subuno$ and $\eta(x)=0$.
There is a linear map $\mu\in\End_\bF(Z)$ such that $\eta(f)=\mu(f)x$, and
$\mu$ is a derivation of $Z$. Also, for any  $f\in Z$,
$\eta(fx)=\eta(f)x=(\mu(f)x)x=\delta\mu(f)$. For any $f,g\in Z$ we have
$\eta\bigl((fx)(gx)\bigr)=\eta(fx)(gx)-(fx)\eta(gx)$ and:
\[
\begin{split}
&\eta\bigl((fx)(gx)\bigr)=\mu\bigl(\delta(f)g-f\delta(g)\bigr)x,\\
&\eta(fx)(gx)-(fx)\eta(gx)=\bigl(\delta\mu(f)g-f\delta\mu(g)\bigr)x.
\end{split}
\]
With $g=1$ we obtain $[\mu,\delta]=0$, and hence
\begin{equation}\label{eq:deltamu}
\delta(f)\mu(g)=\mu(f)\delta(g)
\end{equation}
for any $f,g\in Z$. Also $\eta\bigl(f(gx)\bigr)=\eta(f)(gx)+f\eta(gx)$, and
\[
\begin{split}
&\eta\bigl(f(gx)\bigr)=\eta\bigl((fg)x\bigr)=\delta\mu(fg),\\[4pt]
&\eta(f)(gx)+f\eta(gx)=(\mu(f)x)(gx)+f\delta\mu(g)\\ &\qquad
=\delta\mu(f)g-\mu(f)\delta(g)+f\delta\mu(g)=\delta\mu(fg)-2\mu(f)\delta(g)-\delta(f)\mu(g),
\end{split}
\]
so that
\begin{equation}\label{eq:deltamubis}
2\mu(f)\delta(g)+\delta(f)\mu(g)=0,
\end{equation}
for any $f,g\in Z$.

If the characteristic of $\bF$ is $\ne 3$, \eqref{eq:deltamu} and
\eqref{eq:deltamubis} give $\mu(f)\delta(g)=0$ for any $f,g\in Z$, so that
$\mu(Z)\delta(Z)=0$, and since $Z=Z\delta(Z)$, $\mu(Z)Z=0$, so that $\mu=0$,
thus getting $\Der(K)\subuno=\Inder(K)\subuno$.

However, if the characteristic of $\bF$ is $3$, then \eqref{eq:deltamubis} is
equivalent to \eqref{eq:deltamu}. As $Z=Z\delta(Z)$, there are elements
$a_i,b_i\in Z$ with $1=\sum a_i\delta(b_i)$ and for any $f\in Z$, using
\eqref{eq:deltamu} we compute:
\[
\mu(f)=\sum \mu(f)a_i\delta(b_i)=\sum \delta(f)a_i\mu(b_i)=\bigl(\sum
a_i\mu(b_i)\bigr)\delta(f).
\]
Thus $\mu\in Z\delta$. Conversely, if $\mu\in Z\delta$, then \eqref{eq:deltamu}
holds, and $\eta=\mu^-$ is indeed an odd derivation.
\end{proof}

\smallskip

We will denote by $\overline{\Der}(K)$ the subalgebra $\Der(K)\subo\oplus
\{\eta_a:a\in Z\}=\Der(K)\subo\oplus\Inder(K)\subuno$ of $\Der(K)$. If the
characteristic is $\ne 3$, then $\overline{\Der}(K)=\Der(K)$.

In spite of the different behavior in the previous proposition depending on the
characteristic being $3$ or $\ne 3$, the Lie superalgebra which plays a key
role in studying the Lie superalgebra of derivations of the Cheng-Kac Jordan
superalgebras is the Lie superalgebra $\overline{\Der}(K)$, whose definition
and behavior is independent of the characteristic.

\smallskip

\begin{example}\label{ex:FttpK}
Let $\bF$ be a field of prime characteristic $p>2$, let $Z=\bF[t:t^p=0]$ be the
algebra of truncated polynomials, and let $\delta=\frac{d\ }{dt}$ be the usual
derivative. Then $Z$ is $\delta$-simple. Let $K=Z\oplus Zx$ be the the Kantor
double superalgebra of vector type as above. Then $\Der(Z)=Z\delta$, because
any derivation is determined by its action on $t$. Hence
$\Der(K)\subo=\{\check\mu: \mu\in Z\delta\}=D(Zx,x)$, while
$\Inder(K)\subuno=D(Z,Zx)$. Since we have $Z=\delta(Z)\oplus\bF t^{p-1}$,
$D(Z,Zx)=\eta_Z=\eta_{\delta(Z)}\oplus\bF\eta_{t^{p-1}}=D(Z,x)\oplus \bF
D(t^{p-1},tx)$. If $p>3$, then $\Der(K)\subuno=\Inder(K)\subuno$, while for
$p=3$, $\Der(K)\subuno=\Inder(K)\subuno\oplus\{\mu^- :\mu\in \Der(Z)\}$.
\end{example}

\bigskip


\section{Derivations of the Cheng-Kac Jordan superalgebras}

\subsection{Even derivations} Let $J$ be the Cheng-Kac Jordan superalgebra
$J=JCK(Z,\delta)$ for a unital commutative and associative algebra $Z$ and a
derivation $\delta$ of $Z$ with $Z=Z\delta(Z)$. The computation of
$\Der(J)\subo$ will be done in several steps:

\medskip

\subsubsection{} The restriction
$\Der(J)\subo\rightarrow \Der(J\subo): \partial\mapsto \partial\vert_{J\subo}$
is one-to-one:

For if $\partial$ is an even derivation of $J$ with $\partial(J\subo)=0$, then
in particular $\partial(w_i)=0$, $i=1,2,3$, and hence $\partial(Zx)\subseteq
Zx$, as $Zx$ is the annihilator in $J$ of $\{w_1,w_2,w_3\}$. Therefore
$\partial\vert_K\in\Der(K)\subo$ with $\partial(Z)=0$. Proposition
\ref{pr:DerZZx} shows that $\partial\vert_K=0$, so $\partial(Zx)=0$ and
$\partial(Zx_i)=\partial\bigl((\delta(Z)Z)x_i\bigr)=\partial\bigl((Zw_i)(Zx)\bigr)=0$.

\medskip

\subsubsection{}
$\Der(J\subo)=D(J\subo,J\subo)\vert_{J\subo}\oplus\{\hat\partial: \partial\in
\Der(Z)\}$, where for any $\partial\in\Der(Z)$, $\hat\partial$ is the
derivation of $J\subo$ such that $\hat\partial(w_i)=0$, $i=1,2,3$, and
$\hat\partial\vert_Z=\partial$. (Note that $J\subo$ is the tensor product of
$Z$ and the Jordan algebra of a form over $\bF$: $\bF 1\oplus(\oplus_{i=1}^3\bF
w_i$).)

It is clear that both $D(J\subo,J\subo)\vert_{J\subo}$ and $\{\hat\partial:
\partial\in \Der(Z)\}$ are contained in $\Der(J\subo)$, and since
$D(J\subo,J\subo)(Z)=0$, the intersection of these two subspaces is trivial.

Now, let $\eta$ be a derivation of $J\subo$, then $\eta(Z)$ is contained in
$Z$, since $Z$ is the center of $J\subo$. Let $\partial=\eta\vert_Z$. Then
$\bar \eta=\eta-\hat\partial$ is a derivation of $J\subo$ which annihilates
$Z$, and hence $\bar\eta\in\Der_Z(J\subo)=\Der\bigl(\bF
1\oplus(\oplus_{i=1}^3\bF w_i)\bigr)\otimes_{\bF}Z$. But $\Der\bigl(\bF
1\oplus(\oplus_{i=1}^3\bF w_i)\bigr)=\Inder\bigl(\bF 1\oplus(\oplus_{i=1}^3\bF
w_i)\bigr)$ and the result follows.

\medskip

\subsubsection{}
Given any $\partial\in\Der(K)\subo$, Proposition \ref{pr:DerZZx} shows that the
restriction $\partial\vert_Z=\mu\in\Der(Z)$ satisfies $[\mu,\delta]=2a\delta$
for some $a\in Z$, such that $\partial(x)=ax$. Define $\tilde\partial$ by means
of
\begin{equation}\label{eq:tildepartial}
\tilde\partial\vert_{K}=\partial,\quad\tilde\partial(fw_i)=\mu(f)w_i,\quad
\tilde\partial(fx_i)=(\mu(f)-af)x_i,
\end{equation}
for any $f\in Z$ and $i=1,2,3$.

\medskip

\subsubsection{}
$\Der(J)\subo=D(J\subo,J\subo)\oplus\{\tilde\partial:\partial\in\Der(K)\subo\}$.

Indeed, both $D(J\subo,J\subo)$ and
$\{\tilde\partial:\partial\in\Der(K)\subo\}$ are contained in $\Der(J)\subo$,
and its intersection is trivial since $D(J\subo,J\subo)$ annihilates $Z$. Now,
if $\eta$ is an even derivation of $J$ we know from the previous assertions
that $\eta\vert_{J\subo}=\eta_1+\eta_2$ with $\eta_1\in
D(J\subo,J\subo)\vert_{J\subo}$ and $\eta_2=\hat\mu$ for some $\mu\in\Der(Z)$.
Since $\eta_1$ is the restriction to $J\subo$ of a derivation in
$D(J\subo,J\subo)$ we may subtract this derivation from $\eta$ and hence assume
that $\eta\vert_{J\subo}=\hat\mu$ for some $\mu\in\Der(Z)$. In particular we
have $\eta(w_i)=0$, $i=1,2,3$, and hence $\eta(Zx)\subseteq Zx$ since $Zx$ is
the annihilator of $\{w_1,w_2,w_3\}$. Therefore we get $\partial=\eta\vert_K\in
Der(K)\subo$. We conclude that
$\eta\vert_{J\subo}=\tilde\partial\vert_{J\subo}$, and hence
$\eta=\tilde\partial$ since the restriction map is one-to-one.

\medskip

\subsubsection{} $\Inder(J)\subo=D(J\subo,J\subo)\oplus D(x,Zx)$ and
$D(x,Zx)=\{\tilde\partial:  \partial=\check\mu\ \text{for}$ $\text{some $\mu\in
Z\delta$}\}$.

For any $f,g\in Z$, $D(x,fx)(w_i)=0$ (recall that $Zx$ annihilates the
$w_i$'s), and $D(x,fx)(g)=-\delta(fg)+(\delta(f)g-f\delta(g))=-2f\delta(g)$.
Hence $D(x,fx)=-2\tilde\partial$, with $\partial=(f\delta)\check{}$.

Consider the $\bZ_2^2$-grading of $J$ in \eqref{eq:Z22grading} and the induced
grading on $\Der(J)$. From the previous assertion, and since
$\{\tilde\partial:\partial\in \Der(K)\subo\}$ is contained in $\Der(J)^{[\bar
0,\bar 0]}$ it is enough to deal with the $[\bar 0,\bar 0]$ component. Using
that
\begin{equation}\label{eq:cyclic}
(-1)^{\bar z_1\bar z_3}D(z_1,z_2z_3)+(-1)^{\bar z_2\bar
z_1}D(z_2,z_3z_1)+(-1)^{\bar z_3\bar z_2}D(z_3,z_1z_2)=0,
\end{equation}
for any homogeneous $z_1,z_2,z_3\in J$ we obtain
\[
\begin{split}
D(Zx,Zx)&\subseteq D(J\subo,J\subo)\oplus D(x,Zx),\\ D(Zx_j,Zx_j)&\subseteq
D(J\subo,J\subo)+D(x_j,Zx_j)=D(J\subo,J\subo),
\end{split}
\]
as $D(x_j,fx_j)$ is $0$ on $Z$, $x$ and $x_i$, $i=1,2,3$, for any $f\in Z$.

\medskip

\subsubsection{} The restriction of $D(Z,J\subo)$ and of $D(Zw_i,Zw_i)$
($i=1,2,3$) to $J\subo$ is $0$, so $D(Z,J\subo)=0$, since the restriction map
is one-to-one. Also \eqref{eq:cyclic} gives $D(Zw_i,Zw_j)=D(w_i,Zw_j)$.
Therefore $D(J\subo,J\subo)=\oplus_{i=1}^3D(w_i,Zw_{i+1})$ (indices modulo
$3$).

\medskip

The next result summarizes most of the work in this subsection:

\begin{proposition}\label{pr:DerJ0}
Let $J$ be the Cheng-Kac Jordan superalgebra $JCK(Z,\delta)$ for a unital
commutative and associative algebra $Z$ and a derivation $\delta$ of $Z$ with
$Z=Z\delta(Z)$. Relative to the $\bZ_2^2$-grading induced by
\eqref{eq:Z22grading} we have
\[
\begin{split}
\Der(J)\subo^{[\bar 0,\bar 0]}&=\{\tilde\partial: \partial\in
\Der(K)\subo\},\\[2pt] \Inder(J)\subo^{[\bar 0,\bar
0]}&=D(x,Zx)=\{\tilde\partial: \partial\in\Inder(K)\subo\}\\
&\phantom{=D(x,Zx)}=\{\tilde\partial:  \partial=\check\mu\ \text{for some
$\mu\in Z\delta$}\},
\\[2pt]
\Der(J)\subo^{[\bar 1,\bar 0]}&=D(w_2,Zw_3),\\ \Der(J)\subo^{[\bar 0,\bar
1]}&=D(w_3,Zw_1),\\ \Der(J)\subo^{[\bar 1,\bar 1]}&=D(w_1,Zw_2).
\end{split}
\]
\end{proposition}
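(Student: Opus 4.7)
The plan is to assemble the conclusions of subsubsections 4.1.1--4.1.6 and refine them according to the $\bZ_2^2$-grading \eqref{eq:Z22grading}. The key preparatory observation is that the extension $\partial\mapsto\tilde\partial$ defined in \eqref{eq:tildepartial} lands in the $[\bar 0,\bar 0]$ component of $\Der(J)$: by direct inspection of the formulas, $\tilde\partial$ sends $Z$ into $Z$, $Zx$ into $Zx$, $Zw_i$ into $Zw_i$, and $Zx_i$ into $Zx_i$, so it is homogeneous of degree $[\bar 0,\bar 0]$ with respect to the induced $\bZ_2^2$-grading on $\Der(J)$.

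Next I would track the $\bZ_2^2$-degrees of the three homogeneous summands of $D(J\subo,J\subo)$ produced in 4.1.6. From \eqref{eq:Z22grading} we read $\deg w_1=[\bar 1,\bar 0]$, $\deg w_2=[\bar 0,\bar 1]$, $\deg w_3=[\bar 1,\bar 1]$, and since $D(a,b)$ is homogeneous of degree $\deg a+\deg b$, it follows that
\[
D(w_2,Zw_3)\subseteq\Der(J)^{[\bar 1,\bar 0]},\quad D(w_3,Zw_1)\subseteq\Der(J)^{[\bar 0,\bar 1]},\quad D(w_1,Zw_2)\subseteq\Der(J)^{[\bar 1,\bar 1]}.
\]
In particular $D(J\subo,J\subo)^{[\bar 0,\bar 0]}=0$.

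Combining these two observations with the direct sum decomposition $\Der(J)\subo=D(J\subo,J\subo)\oplus\{\tilde\partial:\partial\in\Der(K)\subo\}$ from 4.1.4, the four $\bZ_2^2$-homogeneous pieces of $\Der(J)\subo$ separate cleanly, giving the stated identifications in degrees $[\bar 0,\bar 0]$, $[\bar 1,\bar 0]$, $[\bar 0,\bar 1]$ and $[\bar 1,\bar 1]$. For the inner part, the decomposition $\Inder(J)\subo=D(J\subo,J\subo)\oplus D(x,Zx)$ from 4.1.5 immediately yields $\Inder(J)\subo^{[\bar 0,\bar 0]}=D(x,Zx)$, and the explicit computation $D(x,fx)=-2\tilde\partial$ with $\partial$ associated to $f\delta$, together with the description $\Inder(K)\subo\cong Z\delta$ in Proposition \ref{pr:DerZZx}, identifies this subspace with $\{\tilde\partial:\partial\in\Inder(K)\subo\}$.

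No step is a serious obstacle: the substantive work is already done in 4.1.1--4.1.6, and what remains is bookkeeping. The only verifications that require any attention are the $[\bar 0,\bar 0]$-homogeneity of the $\tilde\partial$'s and the placement of each $D(w_i,Zw_{i+1})$ into the correct $\bZ_2^2$-component, both immediate from the definitions.
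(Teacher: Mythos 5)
Your proposal is correct and follows essentially the same route as the paper, which states this proposition explicitly as a summary of the work in subsubsections 4.1.1--4.1.6; the only addition you make is to spell out the $\bZ_2^2$-degree bookkeeping (that each $\tilde\partial$ is homogeneous of degree $[\bar 0,\bar 0]$ and that $D(w_i,Zw_{i+1})$ lands in the component given by $\deg w_i+\deg w_{i+1}$), which the paper leaves implicit. All of these verifications check out against the grading \eqref{eq:Z22grading} and the decompositions in 4.1.4--4.1.6.
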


\begin{corollary}\label{co:DerJ0}
Let $J$ be the Cheng-Kac Jordan superalgebra $JCK(Z,\delta)$ for a unital
commutative and associative algebra $Z$ and a derivation $\delta$ of $Z$ with
$Z=Z\delta(Z)$. Then $\dim_\bF\Inder(J)\subo=4\dim_\bF Z=\dim_\bF J\subo$.
\end{corollary}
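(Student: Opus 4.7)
The plan is to exploit the $\bZ_2^2$-grading of $\Der(J)\subo$ recorded in Proposition \ref{pr:DerJ0} and count the dimension of each homogeneous component of the ideal $\Inder(J)\subo$ separately. Observe first that the three off-diagonal components
\[
\Der(J)\subo^{[\bar 1,\bar 0]}=D(w_2,Zw_3),\quad
\Der(J)\subo^{[\bar 0,\bar 1]}=D(w_3,Zw_1),\quad
\Der(J)\subo^{[\bar 1,\bar 1]}=D(w_1,Zw_2)
\]
consist by definition of inner derivations, and together with $\Inder(J)\subo^{[\bar 0,\bar 0]}=D(x,Zx)$ they already exhaust $\Inder(J)\subo$. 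Since they sit in distinct components of a grading, their sum is direct, so it suffices to establish that each of the four summands has dimension $\dim_\bF Z$.

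For the $[\bar 0,\bar 0]$ component, Proposition \ref{pr:DerJ0} identifies $D(x,Zx)$ with $\{\tilde\partial:\partial=\check\mu,\,\mu\in Z\delta\}$. The map $\partial\mapsto\tilde\partial$ of \eqref{eq:tildepartial} is injective by construction, and $\mu\mapsto\check\mu$ is the isomorphism of Proposition \ref{pr:DerZZx}, so the dimension equals $\dim_\bF Z\delta$. Finally $Z\to Z\delta$, $f\mapsto f\delta$, is injective: if $f\delta=0$ then $fZ=fZ\delta(Z)=0$, whence $f=0$ (using $Z=Z\delta(Z)$ and $1\in Z$). Therefore $\dim_\bF\Inder(J)\subo^{[\bar 0,\bar 0]}=\dim_\bF Z$.

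For each off-diagonal summand, I would show that $f\mapsto D(w_i,fw_{i+1})$ is injective by evaluating on a single convenient element. For example, since $w_1w_2=0$ and $w_1^2=1$, a direct computation gives
\[
D(w_1,fw_2)(w_1)=w_1\bigl((fw_2)w_1\bigr)-(fw_2)(w_1^2)=-fw_2,
\]
so $D(w_1,fw_2)=0$ forces $f=0$; the analogous computations for the pairs $(w_2,w_3)$ and $(w_3,w_1)$ are formally identical (taking into account $w_3^2=-1$, which only changes the sign). Hence each $D(w_i,Zw_{i+1})$ has dimension $\dim_\bF Z$.

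Summing the four components yields $\dim_\bF\Inder(J)\subo=4\dim_\bF Z$, which matches $\dim_\bF J\subo=\dim_\bF\bigl(Z\oplus Zw_1\oplus Zw_2\oplus Zw_3\bigr)=4\dim_\bF Z$. There is no real obstacle in the argument; the only delicate point is the injectivity of $f\mapsto f\delta$ and of $f\mapsto D(w_i,fw_{i+1})$, both of which rest on the unitality of $Z$ together with the hypothesis $Z=Z\delta(Z)$.
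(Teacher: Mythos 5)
Your proof is correct and follows essentially the same route as the paper: decompose $\Inder(J)\subo$ via the $\bZ_2^2$-grading of Proposition \ref{pr:DerJ0}, identify the $[\bar 0,\bar 0]$ piece with $Z\delta\cong Z$ (injectivity of $f\mapsto f\delta$ from $Z=Z\delta(Z)$), and show each $f\mapsto D(w_i,fw_{i+1})$ is injective by evaluating at $w_i$. The only difference is that you spell out the intermediate steps slightly more explicitly than the paper does.
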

\begin{proof}
It is enough to take into account $\dim_\bF Z\delta=\dim_\bF Z$ (because
$f\delta=0$ if and only if $f\delta(Z)=0$ which gives $f=0$ as $Z=Z\delta(Z)$)
and also $\dim_\bF D(w_i,Zw_{i+1})=\dim_\bF Z$ for any $i=1,2,3$ (indices
modulo $3$), as $D(w_i,fw_{i+1})(w_i)=-(fw_{i+1})(w_i^2)=\pm fw_{i+1}$ for any
$f\in Z$, so that the linear maps $Z\rightarrow \Der(J)\subo: f\mapsto
D(w_i,fw_{i+1})$ are one-to-one.
\end{proof}

\begin{example}
Let $\bF$ be a field of prime characteristic $p>2$, let $Z=\bF[t:t^p=0]$ be the
algebra of truncated polynomials, and let $\delta=\frac{d\ }{dt}$ be the usual
derivative. Let $J=JCK(Z,\delta)$ be the associated Cheng-Kac Jordan
superalgebra. Example \ref{ex:FttpK} and Proposition \ref{pr:DerJ0} give:
\[
\Der(J)\subo=\Inder(J)\subo=D(x,Zx)\oplus D(w_2,Zw_3)\oplus D(w_3,Zw_1)\oplus
D(w_1,Zw_2).
\]
\end{example}

\bigskip

\subsection{Odd derivations}\label{ss:OddDerivations} Let again $J$ be the
Cheng-Kac Jordan superalgebra $J=JCK(Z,\delta)$ for a unital commutative and
associative algebra $Z$ and a derivation $\delta$ of $Z$ with $Z=Z\delta(Z)$.

\medskip

\subsubsection{}\label{sss:tildeetaa} Let $a\in Z$ and consider the odd
derivation $\eta_a$ of $K$ in Proposition \ref{pr:DerZZx}: $\eta_a(Z)=0$,
$\eta_a(x)=a$.
Then $\eta_a$ extends to an inner odd derivation $\tilde\eta_a$ of $J$ by means
of $\tilde\eta_a(x_j)=0$ for any $j=1,2,3$.

Actually, for any $f,g\in Z$, the action of the odd derivation $D(f,gx)$ is
determined by:
\begin{equation}\label{eq:DfgxJ}
D(f,gx): Z\mapsto 0,\quad x\mapsto g\delta(f),\quad x_j\mapsto 0,\ j=1,2,3.
\end{equation}
(Compare to \eqref{eq:DfgxK}.) Hence if $a=\sum g_i\delta(f_i)$
($Z=Z\delta(Z)$), we get $\tilde\eta_a=\sum D(f_i,g_ix)$.

\medskip

\subsubsection{} Hence if $\partial$ is an odd derivation in
$\Der(J)\subuno^{[\bar 0,\bar 0]}$ with $\partial(x)=a$, then
$(\partial-\tilde\eta_a)(x)=0$.

\medskip

\subsubsection{} Let then $\partial$ be an odd derivation in
$\Der(J)\subuno^{[\bar 0,\bar 0]}$ such that $\partial(x)=0$. According to
Proposition \ref{pr:DerZZx} either $\partial(Z)=0$ or the characteristic of
$\bF$ is $3$ and there is an element $a\in Z$ such that
$\partial(f)=a\delta(f)x$ for any $f\in Z$. For any $j=1,2,3$ there is an
element $b_j\in Z$ such that $\partial(x_j)=b_jw_j$ (recall
$\partial\in\Der(J)\subuno^{[\bar 0,\bar 0]}$). Then, taking indices modulo
$3$,
\[
0=\partial(x_ix_{i+1})=(b_iw_i)x_{i+1}-x_i(b_{i+1}w_{i+1})=-(b_i+b_{i+1})x_{i\times
(i+1)}.
\]
Hence $b_1=-b_2=b_3=-b_1$, so that $b_1=b_2=b_3=0$ and $\partial(x_j)=0$.

Therefore, in case $\partial(Z)=0$, then $\partial=0$, as $Z$, $x$ and the
$x_j$'s generate $J$.

Assume then that the characteristic of $\bF$ is $3$ and
$\partial(f)=a\delta(f)x$ for any $f\in Z$ for a fixed $a\in Z$. Now, for any
$f\in Z$, $\partial(fx_j)=(a\delta(f)x)x_j=-a\delta(f)w_j$ and
$0=\partial\bigl((fx_1)x_2\bigr)=\partial(fx_1)x_2=-(a\delta(f)w_1)x_2=a\delta(f)x_3$.
It follows $a\delta(Z)=0$, so $aZ=aZ\delta(Z)=0$, and $a=0$, so $\partial =0$
in this case too.

\medskip

We conclude that $\Der(J)\subuno^{[\bar 0,\bar 0]}=D(Z,Zx)=\{\tilde\eta_a: a\in
Z\}$.

\bigskip

\subsubsection{} Consider an odd derivation $\partial\in\Der(J)\subuno^{[\bar
1,\bar 0]}$, then $\partial(x_1)=a\in Z$, so $\partial(x_1)=-D(w_1,ax)(x_1)$.
As $D(w_1,ax)$ is in $\Der(J)\subuno^{[\bar 1,\bar 0]}$ too, we may substitute
$\partial$ by $\partial +D(w_1,ax)$ and assume $\partial(x_1)=0$. Then
$\partial(w_1)=\partial(x_1x)=-x_1\partial(x)\in x_1(Zw_1)=0$. For any $f\in
Z$, $\partial(f)=\partial(w_1(fw_1))=w_1\partial(fw_1)\in w_1(Zx)=0$, so
$\partial(Z)=0$. Now we have $\partial(x)=aw_1$ for some $a\in Z$, and hence
\[
\begin{split}
0=\partial\delta(f)&=\partial((fx)x)=\partial(fx)x-(fx)\partial(x)\\
 &=(faw_1)x-(fx)(aw_1)=(\delta(fa)-f\delta(a))x_1=a\delta(f)x_1.
\end{split}
\]
Therefore $a\delta(Z)=0$, so $aZ=aZ\delta(Z)=0$ and $a=0$. Thus
$\partial(x)=0$. Finally $0=\partial(x_1x_2)=-x_1\partial(x_2)$, so
$\partial(x_2)=0$ because $\{v\in Zw_3:x_1v=0\}=0$. In the same vein
$\partial(x_3)=0$, and hence $\partial=0$.

We conclude $\Der(J)\subuno^{[\bar 1,\bar 0]}=D(w_1,Zx)$, and the same
arguments give $\Der(J)\subuno^{[\bar 0,\bar 1]}=D(w_2,Zx)$ and
$\Der(J)\subuno^{[\bar 1,\bar 1]}=D(w_3,Zx)$.

\medskip

Therefore we get the next result:

\begin{proposition}\label{pr:DerJ1}
Let $J$ be the Cheng-Kac Jordan superalgebra $JCK(Z,\delta)$ for a unital
commutative and associative algebra $Z$ and a derivation $\delta$ of $Z$ with
$Z=Z\delta(Z)$. Relative to the $\bZ_2^2$-grading induced by
\eqref{eq:Z22grading} we have
\[
\begin{split}
\Der(J)\subuno^{[\bar 0,\bar 0]}&=D(Z,Zx)=\{\tilde\eta_a:a\in Z\},\\
\Der(J)\subuno^{[\bar 1,\bar 0]}&=D(w_1,Zx),\\ \Der(J)\subuno^{[\bar 0,\bar
1]}&=D(w_2,Zx),\\ \Der(J)\subuno^{[\bar 1,\bar 1]}&=D(w_3,Zx).
\end{split}
\]
\end{proposition}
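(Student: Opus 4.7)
The plan is to split $\Der(J)\subuno$ according to the $\bZ_2^2$-grading in \eqref{eq:Z22grading} and handle each of the four homogeneous components separately. In every case the strategy is the same: identify a family of inner odd derivations of the form $D(\cdot, Zx)$ lying in that component, use it to normalize $\partial$ to a convenient vanishing, and then show the normalized derivation must be zero by tracking its action on the remaining generators of $J$.

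For the $[\bar 0, \bar 0]$ component, the relevant inner family is $\{\tilde\eta_a : a \in Z\}$, which by \eqref{eq:DfgxK} coincides with the span of $D(f, gx)$ extended to $J$ (each such derivation kills $Z$ and the $x_j$'s while sending $x$ to $g\delta(f)$; this exhausts $Z$ because $Z = Z\delta(Z)$). Given $\partial \in \Der(J)\subuno^{[\bar 0, \bar 0]}$, I set $a = \partial(x)$ and subtract $\tilde\eta_a$ to reduce to the case $\partial(x) = 0$. Restriction to $K$ and Proposition \ref{pr:DerZZx} then force $\partial|_Z$ to be either zero or, in characteristic $3$, of the form $\mu^-$ with $\mu = b\delta$ for some $b \in Z$. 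The grading constraints give $\partial(x_j) = c_j w_j$ for some $c_j \in Z$; applying $\partial$ to the identity $x_i x_{i+1} = 0$ and using $w_i x_{i+1} = -x_{i \times (i+1)}$ yields the cyclic system $c_i + c_{i+1} = 0$, hence all $c_j = 0$. The characteristic-$3$ possibility is killed by applying $\partial$ to $(fx_1)x_2 = 0$: the resulting equation forces $b\delta(f) = 0$ for all $f$, so $b = 0$.

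For the $[\bar 1, \bar 0]$ component (and the other two by symmetry), set $a = \partial(x_1) \in Z$. A direct computation from the multiplication tables, using $w_1^2 = 1$, gives $D(w_1, ax)(x_1) = -a$, so replacing $\partial$ by $\partial + D(w_1, ax)$ reduces to $\partial(x_1) = 0$. From $w_1 = x_1 x$ and the grading constraint $\partial(x) \in Zw_1$, the product $x_1 \partial(x)$ vanishes, giving $\partial(w_1) = 0$; then $Z = w_1(Zw_1)$ together with $\partial(Zw_1) \subseteq Zx$ and $w_1 \cdot Zx = 0$ (since $\delta(1) = 0$) gives $\partial(Z) = 0$. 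If $\partial(x) = cw_1$, applying $\partial$ to $(fx)x = \delta(f)$ yields $c\delta(f)x_1 = 0$ for all $f \in Z$, hence $c = 0$. Finally, $0 = \partial(x_1 x_2) = -x_1 \partial(x_2)$, and since $v \mapsto x_1 v$ is injective from $Zw_3$ into $Zx_2$, we get $\partial(x_2) = 0$; likewise $\partial(x_3) = 0$. The $[\bar 0, \bar 1]$ and $[\bar 1, \bar 1]$ cases are handled by the same argument with $w_1$ replaced by $w_2$, $w_3$.

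The most subtle point is the characteristic-$3$ case in the $[\bar 0, \bar 0]$ analysis: the derivations $\mu^-$ of $K$ produced by Proposition \ref{pr:DerZZx} a priori allow odd derivations of $J$ not accounted for by $\{\tilde\eta_a\}$, and it is the purely Jordan identity $(fx_1)x_2 = 0$ that rules them out. This is the one place in the proof where the ambient structure of $J$ strictly refines that of $K$, and it is what guarantees that $\Der(J)\subuno$ behaves uniformly in characteristic, unlike $\Der(K)\subuno$.
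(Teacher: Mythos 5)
Your argument is correct and follows essentially the same route as the paper: normalize in the $[\bar 0,\bar 0]$ component by subtracting $\tilde\eta_a$, invoke Proposition \ref{pr:DerZZx} on the restriction to $K$, kill the coefficients $\partial(x_j)=c_jw_j$ via $x_ix_{i+1}=0$ and the characteristic-$3$ remnant via $(fx_1)x_2=0$; then in the $[\bar 1,\bar 0]$ component normalize by $D(w_1,ax)$ and propagate vanishing through $w_1=x_1x$, $f=w_1(fw_1)$, $(fx)x=\delta(f)$ and $x_1x_2=0$ exactly as in the text. The only blemish is a citation slip (the action of $D(f,gx)$ on $J$ is \eqref{eq:DfgxJ}, not \eqref{eq:DfgxK}), which does not affect the mathematics.
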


\begin{corollary}\label{co:oddDer}
Let $J$ be the Cheng-Kac Jordan superalgebra $JCK(Z,\delta)$ for a unital
commutative and associative algebra $Z$ and a derivation $\delta$ of $Z$ with
$Z=Z\delta(Z)$. Then:
\begin{itemize}
\item Any odd derivation is inner: $\Der(J)\subuno=\Inder(J)\subuno$. \item
    The restriction map $\Der(J)^{[\bar 0,\bar 0]}\mapsto \Der(K):
    \partial\mapsto \partial\vert_K$ is one-to-one with image
    $\overline{\Der}(K)$. \item $\dim_\bF \Der(J)\subuno=4\dim_\bF
    Z=\dim_\bF J\subuno$.
\end{itemize}
\end{corollary}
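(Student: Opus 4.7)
The three items are essentially bookkeeping consequences of Propositions \ref{pr:DerJ0} and \ref{pr:DerJ1}, so my plan is to assemble them carefully rather than do fresh computations.

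For the first item, I would simply read off Proposition \ref{pr:DerJ1}: each graded component $\Der(J)\subuno^{[\varepsilon,\eta]}$ is exhibited there as a sum of spaces of the form $D(-,-)$, all of which lie in $\Inder(J)$ by definition. Since the $\bZ_2^2$-grading of $\Der(J)$ refines the $\bZ_2$-superalgebra grading, summing over the four components gives $\Der(J)\subuno=\Inder(J)\subuno$.

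For the second item, I would decompose $\Der(J)^{[\bar 0,\bar 0]}=\Der(J)\subo^{[\bar 0,\bar 0]}\oplus\Der(J)\subuno^{[\bar 0,\bar 0]}$ and invoke Propositions \ref{pr:DerJ0} and \ref{pr:DerJ1} to identify these with $\{\tilde\partial:\partial\in\Der(K)\subo\}$ and $\{\tilde\eta_a:a\in Z\}$ respectively. By construction, restriction to $K$ inverts the tilde: it sends $\tilde\partial\mapsto\partial\in\Der(K)\subo$ and $\tilde\eta_a\mapsto\eta_a\in\Inder(K)\subuno$. Injectivity of the restriction is then clear, and the image is exactly $\Der(K)\subo\oplus\{\eta_a:a\in Z\}=\overline{\Der}(K)$ in the notation introduced after Proposition \ref{pr:DerZZx}.

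For the third item, I would count each graded piece of $\Der(J)\subuno$ using Proposition \ref{pr:DerJ1}. The component $D(Z,Zx)=\{\tilde\eta_a:a\in Z\}$ has $\bF$-dimension $\dim_\bF Z$ because $\tilde\eta_a(x)=a$ recovers $a$. For the three components $D(w_i,Zx)$, I would check that $a\mapsto D(w_i,ax)$ is injective by evaluating on $x_i$; this computation is already implicit in the derivation of Proposition \ref{pr:DerJ1}, where one sees that $D(w_1,ax)(x_1)=-a$ (and analogously for $i=2,3$), so each of the three $D(w_i,Zx)$ also has dimension $\dim_\bF Z$. Summing the four components yields $\dim_\bF\Der(J)\subuno=4\dim_\bF Z=\dim_\bF J\subuno$. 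The only mildly delicate point is the injectivity verifications, but these are one-line evaluations, so I do not expect any genuine obstacle.
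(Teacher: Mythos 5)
Your proposal is correct and follows exactly the route the paper intends: the corollary is stated without a separate proof precisely because it is the bookkeeping consequence of Propositions \ref{pr:DerJ0} and \ref{pr:DerJ1} that you spell out, and your injectivity checks (e.g.\ $D(w_1,ax)(x_1)=-a$, which appears in the paper's analysis of $\Der(J)\subuno^{[\bar 1,\bar 0]}$) are the right ones. No gaps.
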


\begin{example}\label{ex:FttpJodd}
Let $\bF$ be a field of prime characteristic $p>2$, let $Z=\bF[t:t^p=0]$ be the
algebra of truncated polynomials, and let $\delta=\frac{d\ }{dt}$ be the usual
derivative. Let $J=JCK(Z,\delta)$ be the associated Cheng-Kac Jordan
superalgebra. For any $f\in Z$ and $i=1,2,3$, $D(w_i,fx)+D(f,xw_i)+D(x,fw_i)=0$
\eqref{eq:cyclic}, and $xw_i=0$. Hence $D(w_i,Zx)=D(Zw_i,x)$ holds. Also for
any $f,g\in Z$, $D(f,gx)=\tilde\eta_{g\delta(f)}$ \eqref{eq:DfgxJ} and hence,
as in Example \ref{ex:FttpK}, $D(Z,Zx)=D(Z,x)\oplus \bF D(t,t^{p-1}x)$. It
follows from Proposition \ref{pr:DerJ1} that
$\Inder(J)\subuno=D(J\subo,x)\oplus \bF D(t,t^{p-1}x)$.
\end{example}

\medskip

\begin{remark}\label{re:DZZxj}
For any $i=1,2,3$, $D(Z,Zx_i)=0$, as for any $f,g\in Z$, $D(f,gx_i)$ acts
trivially on $Z$, $x$, and $x_j$, $j=1,2,3$.
\end{remark}

\smallskip

As a direct consequence of Corollaries \ref{co:DerJ0} and \ref{co:oddDer} we
obtain:

\begin{corollary}\label{co:dimensionInder}
Let $J$ be the Cheng-Kac Jordan superalgebra $JCK(Z,\delta)$ for a unital
commutative and associative algebra $Z$ and a derivation $\delta$ of $Z$ with
$Z=Z\delta(Z)$. Then $\dim_\bF\Inder(J)=\dim_\bF J=8\dim_\bF Z$.
\end{corollary}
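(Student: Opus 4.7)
The proof is essentially an accounting step that glues together the two previous corollaries, so my plan is straightforward.

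First I would invoke Corollary \ref{co:DerJ0}, which already supplies the even part: $\dim_\bF\Inder(J)\subo = 4\dim_\bF Z = \dim_\bF J\subo$. Then I would appeal to Corollary \ref{co:oddDer}, whose first bullet gives $\Der(J)\subuno=\Inder(J)\subuno$ and whose third bullet gives $\dim_\bF \Der(J)\subuno = 4\dim_\bF Z = \dim_\bF J\subuno$; combining these yields $\dim_\bF \Inder(J)\subuno = 4\dim_\bF Z = \dim_\bF J\subuno$.

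Finally, since $\Inder(J) = \Inder(J)\subo \oplus \Inder(J)\subuno$ as a $\bZ_2$-graded vector space, I would simply add:
\[
\dim_\bF \Inder(J) \;=\; \dim_\bF \Inder(J)\subo + \dim_\bF \Inder(J)\subuno \;=\; 4\dim_\bF Z + 4\dim_\bF Z \;=\; 8\dim_\bF Z,
\]
which equals $\dim_\bF J$ since $J$ is a free $Z$-module of total rank $8$.

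There is no real obstacle here; the entire content of the corollary has already been done in the preceding two corollaries, and the statement is just the observation that their dimensions add up to $\dim_\bF J$. The proof as I would write it is a single displayed equation together with a one-line citation of Corollaries \ref{co:DerJ0} and \ref{co:oddDer}.
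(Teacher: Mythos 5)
Your proof is correct and is exactly what the paper does: the corollary is stated there as ``a direct consequence of Corollaries \ref{co:DerJ0} and \ref{co:oddDer},'' i.e.\ precisely the addition of the even and odd dimensions you carry out. Nothing further is needed.
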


\bigskip

\subsection{The Tits-Kantor-Koecher Lie superalgebra $\calK(J)$}\label{ss:TKK}
The so called Tits-Kantor-Koecher Lie superalgebra $\calK(J)$ attached to a
Jordan superalgebra $J$ is defined on the direct sum of $\Inder(J)$ and three
copies of $J$ (see \cite{JacobsonJordan}). Actually, $\calL=\calK(J)$ is a
$3$-graded Lie superalgebra $\calL=\calL_{-1}\oplus\calL_0\oplus\calL_1$, where
$\calL_{\pm 1}$ is a copy of $J$, $\calL_0$ is the Lie subalgebra of $\frgl(J)$
generated by the left multiplications by elements in $J$ and by $\Inder(J)$:
$\calL_0=L_J\oplus\Inder(J)$ (the \emph{structure Lie superalgebra} of $J$),
and the Lie bracket is given by:
\[
\begin{split}
[x_1,y_{-1}]&=L_{xy}+D(x,y),\\ [L_a,x_{\pm 1}]&=\pm(ax)_{\pm 1},\\ [d,x_{\pm
1}]&=d(x)_{\pm 1},\\ [x_1,y_1]&=[x_{-1},y_{-1}]=0,
\end{split}
\]
for any $a,x,y\in J$ and $d\in\Inder(J)$, where $x_{\pm 1}$ denotes the element
$x$ as an element in $\calL_{\pm 1}$.

\smallskip

For $J=JCK(Z,\delta)$ a Cheng-Kac Jordan superalgebra, $\calK(J)$ is the
associated Cheng-Kac Lie superalgebra $CK(Z,\delta)$.

Corollary  \ref{co:dimensionInder} immediately gives the dimension of this
Cheng-Kac Lie superalgebra:

\begin{corollary}\label{co:dimensionTKK}
Let $J$ be the Cheng-Kac Jordan superalgebra $JCK(Z,\delta)$ for a unital
commutative and associative algebra $Z$ and a derivation $\delta$ of $Z$ with
$Z=Z\delta(Z)$, and let $\calK(J)=CK(Z,\delta)$ be the associated Cheng-Kac Lie
superalgebra. Then $\dim_\bF\calK(J)=4\dim_\bF J=32\dim_\bF Z$ holds. (And
$\dim_\bF\calK(J)\subo=\dim_\bF\calK(J)\subuno=4\dim_\bF J\subo=4\dim_\bF
J\subuno=16\dim_\bF Z$.)
\end{corollary}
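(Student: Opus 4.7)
The plan is essentially to assemble Corollary \ref{co:dimensionInder} with the definition of $\calK(J)$ given in Subsection \ref{ss:TKK}. By construction,
\[
\calK(J)=\calL_{-1}\oplus\calL_0\oplus\calL_1,
\]
with $\calL_{\pm 1}$ two copies of $J$ and $\calL_0=L_J\oplus\Inder(J)$. So first I would check that the displayed sum defining $\calL_0$ really is a direct sum whose first summand has dimension equal to $\dim_\bF J$. Since $J$ is unital, the evaluation $L_a(1)=a$ shows that $a\mapsto L_a$ is injective, hence $\dim_\bF L_J=\dim_\bF J$. Also, every (super) derivation of a unital superalgebra annihilates $1$, so $L_J\cap\Inder(J)=0$, and the sum is indeed direct.

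Next I would just add dimensions. Using $\dim_\bF J=8\dim_\bF Z$ (which is immediate from the description of $J\subo$ and $J\subuno$ as free $Z$-modules of rank $4$) and $\dim_\bF\Inder(J)=\dim_\bF J=8\dim_\bF Z$ from Corollary \ref{co:dimensionInder}, I get
\[
\dim_\bF\calK(J)=2\dim_\bF J+\dim_\bF L_J+\dim_\bF\Inder(J)=4\dim_\bF J=32\dim_\bF Z.
\]

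For the even/odd split I would track parities carefully. The copies $\calL_{\pm 1}$ of $J$ contribute $\dim_\bF J\subo=4\dim_\bF Z$ to the even part and $\dim_\bF J\subuno=4\dim_\bF Z$ to the odd part each. Inside $\calL_0$, the map $L_a$ has the same parity as $a$, so $L_{J\subo}$ sits in $\calL_0\subo$ and $L_{J\subuno}$ in $\calL_0\subuno$, each of dimension $4\dim_\bF Z$. By Corollaries \ref{co:DerJ0} and \ref{co:oddDer}, $\dim_\bF\Inder(J)\subo=\dim_\bF\Inder(J)\subuno=4\dim_\bF Z$ as well. Adding the contributions gives $\dim_\bF\calK(J)\subo=\dim_\bF\calK(J)\subuno=16\dim_\bF Z$, which is the remaining assertion.

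There is really no obstacle here: all the substantive work was already carried out in the previous subsections (determining $\Der(J)\subo$ and $\Der(J)\subuno$, showing all odd derivations are inner, and counting dimensions in Corollaries \ref{co:DerJ0}, \ref{co:oddDer}, \ref{co:dimensionInder}). The corollary is purely a bookkeeping consequence, so I would present the proof in two or three lines, citing Corollary \ref{co:dimensionInder} and the description of $\calK(J)$ from Subsection \ref{ss:TKK}.
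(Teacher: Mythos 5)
Your proof is correct and follows exactly the route the paper intends: the paper gives no written proof, stating only that Corollary \ref{co:dimensionInder} ``immediately gives'' the result, and your bookkeeping (injectivity of $a\mapsto L_a$ via $L_a(1)=a$, directness of $L_J\oplus\Inder(J)$ since derivations kill $1$, and the parity count using Corollaries \ref{co:DerJ0} and \ref{co:oddDer}) is precisely the intended verification.
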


\bigskip


\section{A more symmetric multiplication table}

Assume from now on that our ground field satisfies $-1\in\bF^2$. (This is
always the case if the characteristic of $\bF$ is congruent to $1$ modulo
$4$.)

Consider the following elements in the Cheng-Kac Jordan superalgebra
$J=JCK(Z,\delta)$:
\[
\begin{aligned}
&&\quad v_1&=\sqrt{-1}w_1,&\quad v_2&=\sqrt{-1}w_2,&\quad v_3&=w_3,\\
y&=x,&y_1&=\sqrt{-1}x_1,& y_2&=\sqrt{-1}x_2,& y_3&=x_3,
\end{aligned}
\]
and define $y_{1\wedge 2}=-y_{2\wedge 1}=y_3$, $y_{2\wedge 3}=-y_{3\wedge
2}=y_1$, $y_{3\wedge 1}=-y_{1\wedge 3}=y_2$, $y_{i\wedge i}=0$, $i=1,2,3$. Then
$\{1,v_1,v_2,v_3\}$ is a $Z$-basis of $J\subo$, $\{y,y_1,y_2,y_3\}$ is a
$Z$-basis of $J\subuno$, $v_i^2=-1$ for any $i=1,2,3$, $v_iv_j=0$ for any $i\ne
j$, and the multiplication of even and odd elements and of odd elements is
given by the new tables:

\[
\vbox{\offinterlineskip \halign{\hfil$#$\enspace\hfil&#\vreglon
 &\hfil\enspace$#$\enspace\hfil&#\vregleta
 &\hfil\enspace$#$\enspace\hfil\cr
 J\subo J\subuno&&gy&&gy_j\cr
 \noalign{\hreglon}
 f&&(fg)y&&(fg)y_j\cr
 \noalign{\hregleta}
 fv_i&&\bigl(\delta(f)g\bigr)y_i&&(fg)y_{i\wedge j}\cr
 }}
\qquad\qquad \vbox{\offinterlineskip \halign{\hfil$#$\enspace\hfil&#\vreglon
 &\hfil\enspace$#$\enspace\hfil&#\vregleta
 &\hfil\enspace$#$\enspace\hfil\cr
 J\subuno J\subuno&&gy&&gy_j\cr
 \noalign{\hreglon}
 fy&&\delta(f)g-f\delta(g)&&-(fg)v_j\cr
 \noalign{\hregleta}
 fy_i&&(fg)v_i&&0\cr
 }}
\]
\medskip

Now the three indices $1,2,3$ play the same role. This fact, together with the
$\bZ_2^2$-grading considered above proves the next result. In it we consider
the generators $\tau_1=(1\,2)(3\,4)$, $\tau_2=(2\,3)(4\,1)$,
$\varphi=(1\,2\,3)$ and $\tau=(1\,2)$ of the symmetric group $S_4$. The choice
of these generators is due to the fact that the $\bZ_2^2$-grading above is the
decomposition into common eigenspaces for the elements in Klein's $4$-group
(generated by $\tau_1$ and $\tau_2$).

\begin{theorem}\label{th:S4action}
The symmetric group of degree $4$ embeds in the group $\Aut J$ of automorphisms
of $J$ as follows:
\begin{itemize}
\item $\tau_1=(1\,2)(3\,4)$ acts as the identity on $J^{[\bar 0,\bar
    0]}\oplus J^{[\bar 1,\bar 1]}$ and multiplying by $-1$ on $J^{[\bar
    1,\bar 0]}\oplus J^{[\bar 0,\bar 1]}$, \item $\tau_2=(2\,3)(4\,1)$ acts
    as the identity on $J^{[\bar 0,\bar 0]}\oplus J^{[\bar 1,\bar 0]}$ and
    multiplying by $-1$ on $J^{[\bar 0,\bar 1]}\oplus J^{[\bar 1,\bar 1]}$,
    \item $\varphi=(1\,2\,3)$ acts as the $Z$-linear map such that
    $\varphi(1)=1$ and
\[
\varphi(v_i)=v_{i+1},\quad \varphi(y_i)=y_{i+1},\quad \varphi(y)=y
\]
(indices modulo $3$). \item $\tau=(1\,2)$ acts as the $Z$-linear map such
that $\tau(1)=1$, $\tau(y)=y$ and
    \[
    \begin{aligned}
    \tau(v_1)&=-v_2,&\quad \tau(v_2)&=-v_1,&\quad \tau(v_3)&=-v_3\\
    \tau(y_1)&=-y_2,&\tau(y_2)&=-y_1,&\tau(y_3)&=-y_3.
    \end{aligned}
    \]
\end{itemize}
\end{theorem}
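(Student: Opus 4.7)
My approach has two phases: first, verify that each of the four maps $\tau_1, \tau_2, \varphi, \tau$ is an automorphism of the Jordan superalgebra $J$; second, verify that together they satisfy the relations defining the symmetric group $S_4$, yielding an embedding $S_4 \hookrightarrow \Aut(J)$.

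For $\tau_1$ and $\tau_2$ the argument is essentially formal. By part (3) of Proposition \ref{pr:easyproperties}, $J$ is $\bZ_2^2$-graded, so every character $\chi\colon \bZ_2^2 \to \bF^\times$ defines an automorphism acting as $\chi(g)$ on the component $J^g$. The maps $\tau_1$ and $\tau_2$ correspond precisely to the two coordinate projections $\bZ_2^2 \to \bZ_2 \to \{\pm 1\}$ (and $\tau_1\tau_2$ to the third, diagonal one).

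For $\varphi$ I would inspect the new, symmetric multiplication tables. In them the indices $1, 2, 3$ appear in a strictly cyclic manner: $v_i^2 = -1$ independently of $i$, $v_iv_j = 0$ for $i \neq j$, the product $(fv_i)(gy_j) = (fg)\, y_{i\wedge j}$ uses only the cyclic cross-product notation, and $(fy_i)(gy) = (fg)v_i$ behaves the same way. Hence the $Z$-linear cyclic shift $v_i \mapsto v_{i+1}$, $y_i \mapsto y_{i+1}$ (with $Z$ and $y$ pointwise fixed) preserves all products. For $\tau$ a direct case check of the tables is needed. The origin of the signs in $\tau$ is that the transposition $(1\,2)$ reverses the orientation of the cross product (for example $y_{1\wedge 2} = y_3$ would naively map to $y_{2\wedge 1} = -y_3$), so the $-1$ factors on $v_3$ and $y_3$ are there to compensate; a handful of representative computations, together with the $\bZ_2^2$-symmetry, handle all entries.

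The second phase is elementary group theory. The automorphisms $\tau_1, \tau_2$ commute and have order $2$, so they generate a copy of Klein's group $V_4$. A direct computation on the generators $v_i, y_i, y$ gives $\varphi^3 = 1$, $\tau^2 = 1$, and $\tau \varphi \tau = \varphi^{-1}$, so $\langle \varphi, \tau \rangle$ is a copy of $S_3$. It remains to check that the conjugation action of $\langle \varphi, \tau\rangle$ on the three non-trivial elements of $V_4$ matches the standard $S_3$-action; this can be read off from how $\varphi$ and $\tau$ permute the homogeneous components of the $\bZ_2^2$-grading, producing the split decomposition $S_4 \cong V_4 \rtimes S_3$ inside $\Aut(J)$. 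The main obstacle is the sign bookkeeping for $\tau$: tracking how the $\sqrt{-1}$-twists embedded in the definitions of $v_i$ and $y_i$ interact with the index swap across the various entries of the tables.
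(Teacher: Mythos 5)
Your proposal is correct and follows essentially the same route as the paper, which proves the theorem in one line by appealing to the cyclic symmetry of the indices $1,2,3$ in the new multiplication table together with the $\bZ_2^2$-grading; your treatment of $\tau_1,\tau_2$ as characters of the grading group and your identification of the uniform $-1$ signs in $\tau$ as compensating for the antisymmetry of $y_{i\wedge j}$ are exactly the details the paper leaves implicit. The concluding group-theoretic step (checking $V_4\rtimes S_3\cong S_4$ via the conjugation action on the graded components, with injectivity following since no nontrivial normal subgroup of $S_4$ can lie in the kernel) is also sound.
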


\bigskip

Note that the symmetric group $S_4$ embeds in this way in $(\Aut J)\cap
\End_Z(J)\subo$.

As a consequence, the Lie superalgebra of derivations of $J$: $\Der(J)$, and
the Lie superalgebra of inner derivations: $\Inder(J)$, are Lie superalgebras
with $S_4$-symmetry.

\smallskip

It is straightforward to check that the even derivations $\tilde\partial$ in
\eqref{eq:tildepartial} commute with the action of the symmetric group $S_4$.
Also $S_4$ acts trivially on $K=Z\oplus Zx=Z\oplus Zy$, and therefore $S_4$
acts trivially by conjugation on $\Der(J)\subuno^{[\bar 0,\bar
0]}=D(Z,Zx)=D(Z,Zy)$ (Proposition \ref{pr:DerJ1}). Since it acts trivially too
on $\Der(J)\subo^{[\bar 0,\bar 0]}=\{\tilde\partial:\partial\in\Der(K)\subo\}$
(Proposition \ref{pr:DerJ0}), the next results follows:

\begin{proposition}\label{pr:S4fixesDerJ00}
The action by conjugation of $S_4$ on $\Der(J)^{[\bar 0,\bar 0]}$ is trivial.
\end{proposition}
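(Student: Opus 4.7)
My plan is to reduce the statement to one pointwise observation about $K=Z\oplus Zy$ and then treat the even and odd components of $\Der(J)^{[\bar 0,\bar 0]}$ separately. Reading off Theorem \ref{th:S4action}, the generators $\tau_1$ and $\tau_2$ act as the identity on $J^{[\bar 0,\bar 0]}\supseteq K$, while $\varphi$ and $\tau$ are $Z$-linear and fix $1$ and $y$, so they fix $Z\oplus Zy=K$ pointwise. Hence every element of $S_4$ restricts to the identity on $K$, and a direct check from the same theorem shows that the grading component $J^{[\bar 0,\bar 0]}$ is stable under $S_4$, so conjugation by $S_4$ preserves $\Der(J)^{[\bar 0,\bar 0]}$ together with each of its two parity components.

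For the odd part, Proposition \ref{pr:DerJ1} identifies $\Der(J)\subuno^{[\bar 0,\bar 0]}$ with $D(Z,Zy)$. I would invoke the standard identity $\sigma D(a,b)\sigma^{-1}=D(\sigma a,\sigma b)$ valid for $\sigma\in\Aut(J)$ and homogeneous $a,b\in J$. Since both arguments of the inner derivations $D(f,gy)$ spanning $D(Z,Zy)$ lie in $K$, which is pointwise fixed, conjugation by $S_4$ acts trivially on this component.

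For the even part, Proposition \ref{pr:DerJ0} realises $\Der(J)\subo^{[\bar 0,\bar 0]}$ as $\{\tilde\partial:\partial\in\Der(K)\subo\}$, with the inverse of the map $\partial\mapsto\tilde\partial$ being restriction to $K$. Given $d=\tilde\partial$ and $\sigma\in S_4$, the conjugate $\sigma d\sigma^{-1}$ belongs again to $\Der(J)\subo^{[\bar 0,\bar 0]}$ by the preceding paragraph, so it equals $\tilde\rho$ for a unique $\rho\in\Der(K)\subo$. Evaluating at an arbitrary element of $K$ and using that $\sigma\vert_K$ is the identity, one gets $\rho=\sigma\partial\sigma^{-1}=\partial$, whence $\sigma d\sigma^{-1}=d$. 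No serious obstacle is anticipated: the entire content is packaged into the identifications of Propositions \ref{pr:DerJ0} and \ref{pr:DerJ1} combined with the elementary fact that $S_4$ acts trivially on $K$.
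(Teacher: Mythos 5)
Your proposal is correct and follows essentially the same route as the paper: both arguments rest on the fact that $S_4$ fixes $K=Z\oplus Zy$ pointwise, handle the odd part via $\sigma D(a,b)\sigma^{-1}=D(\sigma(a),\sigma(b))$ applied to $D(Z,Zy)$, and invoke Propositions \ref{pr:DerJ0} and \ref{pr:DerJ1} to identify the two parity components. The only (harmless) variation is in the even part, where the paper performs the direct componentwise check that each $\tilde\partial$ commutes with the $S_4$-action, while you deduce the same conclusion formally from the equality $\Der(J)\subo^{[\bar 0,\bar 0]}=\{\tilde\partial:\partial\in\Der(K)\subo\}$ together with the injectivity of restriction to $K$.
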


\bigskip


\section{Coordinate superalgebra}

Let $J$ be the Cheng-Kac Jordan superalgebra $JCK(Z,\delta)$ for a unital
commutative and associative algebra $Z$ and a derivation $\delta$ of $Z$ with
$Z=Z\delta(Z)$, over a field $\bF$ with $-1\in\bF^2$.  The Lie superalgebra of
derivations $\Der(J)$ is a Lie superalgebra with $S_4$-symmetry (the action of
$S_4$ on $\Der(J)$ given by conjugation), and therefore \cite[\S 2]{EO1} the
subspace $\Der(J)^{[\bar 1,\bar 1]}=\{ X\in\Der(J): \tau_1(X)=X=-\tau_2(X)\}$
is a superalgebra with involution (in the super sense), where the
multiplication and the involution are given by:
\begin{equation}\label{eq:mult-invo}
\begin{split}
X\cdot Y&= -\tau\bigl([\varphi(X),\varphi^2(Y)]\bigr),\\ \bar X&=-\tau(X),\\
\end{split}
\end{equation}
for any $X,Y\in \Der(J)^{[\bar 1,\bar 1]}$. Actually, this is proved in
\cite{EO1} in the algebra setting, but it extends naturally to the superalgebra
setting \cite{EO2}. The superalgebra with involution $(\Der(J)^{[\bar 1,\bar
1]},\cdot,-)$ is said to be the \emph{coordinate superalgebra} of $\Der(J)$. As
proved in \cite{EO1}, this coordinate superalgebra is structurable in case it
is unital.

The aim of this section is the proof of the following result:

\begin{theorem}\label{th:coordinate}
The superalgebra with involution $(\Der(J)^{[\bar 1,\bar 1]},\cdot,-)$ above is
isomorphic to the Jordan superalgebra $K=Z\oplus Zy$ with the trivial
involution (the identity map).
\end{theorem}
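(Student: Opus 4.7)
The plan is to exhibit an explicit isomorphism $\Phi\colon K \to (\Der(J)^{[\bar 1,\bar 1]},\cdot,-)$. From Propositions \ref{pr:DerJ0} and \ref{pr:DerJ1}, the even and odd parts of the target have the descriptions $\Der(J)^{[\bar 1,\bar 1]}_{\bar 0}=D(v_1,Zv_2)$ (via $D(w_1,Zw_2)$) and $\Der(J)^{[\bar 1,\bar 1]}_{\bar 1}=D(v_3,Zy)$, both of $\bF$-dimension $\dim_\bF Z$. This identifies dimensions and suggests the natural ansatz; the only remaining freedom is a choice of scaling on each homogeneous part. I claim the correct map is
\[
\Phi(f)=D(v_1,fv_2),\qquad \Phi(fy)=\sqrt{-1}\,D(v_3,fy),
\]
with $\sqrt{-1}\in\bF$ by hypothesis. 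Bijectivity is then immediate.

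The involution check is short. Since any $\sigma\in\Aut(J)$ satisfies $\sigma(D(a,b))=D(\sigma a,\sigma b)$, and since $\tau(v_1)=-v_2$, $\tau(v_2)=-v_1$, $\tau(v_3)=-v_3$, $\tau(y)=y$, we compute $\tau(D(v_1,fv_2))=D(v_2,fv_1)$ and $\tau(D(v_3,fy))=-D(v_3,fy)$. Because elements of $Z$ are central in the multiplication algebra of $J$, the identity $D(fa,b)=D(a,fb)=fD(a,b)$ holds for any $f\in Z$, so $D(v_2,fv_1)=-D(fv_1,v_2)=-D(v_1,fv_2)$. Hence $\bar{\Phi(k)}=-\tau(\Phi(k))=\Phi(k)$ for every $k\in K$, confirming that the induced involution on the image is trivial.

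For the product, I use the formula $\Phi(a)\cdot\Phi(b)=-\tau([\varphi\Phi(a),\varphi^2\Phi(b)])$ from \eqref{eq:mult-invo} and split into four cases. In each case I push $\varphi,\varphi^2$ inside $D(\cdot,\cdot)$ via the $S_4$-action of Theorem \ref{th:S4action}, compute the resulting super-bracket by evaluating at a single test element ($v_1$ when the output is an even $[\bar 1,\bar 1]$-derivation, $y$ when it is odd), and then apply $-\tau$. Concretely, the even–even case yields $[D(v_2,fv_3),D(v_3,gv_1)]=D(v_1,fg\,v_2)$, which maps under $-\tau$ to $\Phi(fg)$. The odd–odd case yields $[D(v_1,fy),D(v_2,gy)]=D(v_1,(\delta(g)f-g\delta(f))v_2)$, and combining the factor $(\sqrt{-1})^2=-1$, the sign produced by $-\tau$, and the identity $\delta(g)f-g\delta(f)=-(fy\cdot_K gy)$, the net result is exactly $\Phi(fy\cdot_K gy)$. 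Both mixed cases reduce to a bracket equal to $D(v_3,fg\,y)$, which after the $\sqrt{-1}$ factor and $-\tau$ matches $\Phi(fgy)=\Phi(f\cdot_K gy)=\Phi(fy\cdot_K g)$.

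The main obstacle is purely the sign bookkeeping, most delicately in the odd–odd case, where three signs must align: the sign from $-\tau$ acting on an even $[\bar 1,\bar 1]$-derivation, the sign in $\delta(g)f-g\delta(f)=-(\delta(f)g-f\delta(g))$, and the scalar $(\sqrt{-1})^2$ coming from the odd-part scaling in $\Phi$. The requirement $\lambda^2=-1$ on this scalar is precisely what forces the choice $\lambda=\sqrt{-1}$, and this is what makes the standing hypothesis $-1\in\bF^2$ essential to the statement.
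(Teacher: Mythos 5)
Your proposal follows essentially the same route as the paper's own proof: the same isomorphism $\Phi(f)=D(v_1,fv_2)$, $\Phi(fy)=\sqrt{-1}\,D(v_3,fy)$, the same verification that $-\tau$ fixes $\Der(J)^{[\bar 1,\bar 1]}$ pointwise (so the involution is trivial and the product is supercommutative), and the same three product computations, all of whose claimed outputs agree with the paper's. Two points in your justification, however, do not stand as written. First, the principle ``elements of $Z$ are central in the multiplication algebra of $J$, hence $D(fa,b)=D(a,fb)=fD(a,b)$'' is false for $J$: for instance $(fw_1)(gx)=\bigl(\delta(f)g\bigr)x_1$ while $f\bigl(w_1(gx)\bigr)=0$, so $L_{fw_1}\ne fL_{w_1}$ on $J\subuno$. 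The identity you actually need, $D(v_2,fv_1)=-D(v_1,fv_2)$, is nevertheless true; the clean argument is that $Z$ \emph{is} the center of the Jordan algebra $J\subo$, so $D(\cdot,\cdot)$ restricted to $J\subo\times J\subo$ is $Z$-bilinear as a family of operators on $J\subo$, and the restriction map $\Der(J)\subo\rightarrow\Der(J\subo)$ is one-to-one (the first step of the computation of the even derivations), so the identity lifts to $\Der(J)$. Second, your test-element strategy fails in the mixed cases: an element of $D(v_3,Zy)$ is \emph{not} determined by its value at $y$, since $D(v_3,fy)(y)=\delta(f)v_3$ and $\delta$ may have a nontrivial kernel (it does for truncated polynomials). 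Evaluate at $y_3$ instead, where $D(v_3,fy)(y_3)=f$, or avoid test elements altogether by expanding the brackets via $[\partial,D(c,d)]=D(\partial(c),d)+(-1)^{\bar\partial\bar c}D(c,\partial(d))$, which is what the paper does. With these two repairs your argument is complete and coincides with the published proof.
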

\begin{proof}
Propositions \ref{pr:DerJ0} and \ref{pr:DerJ1} give $\Der(J)\subo^{[\bar 1,\bar
1]}=D(v_1,Zv_2)$ and $\Der(J)\subuno^{[\bar 1,\bar 1]}=D(v_3,Zy)$.

For any $f\in Z$ we have
\[
\begin{split}
\tau\bigl(D(v_1,fv_2)\bigr)&=D(\tau(v_1),\tau(fv_2))=D(-v_2,-fv_1)=D(v_2,fv_1)\\
 &=-D(v_1,v_2f)=-D(v_1,fv_2);\\[6pt]
\tau\bigl(D(v_3,fy)\bigr)&=D(\tau(v_3),\tau(fy))=D(-v_3,fy)=-D(v_3,fy).
\end{split}
\]
Therefore, equation \eqref{eq:mult-invo} shows that the involution is trivial:
$\overline{D(v_1,fv_2)}=D(v_1,fv_2)$ and $\overline{D(v_3,fy)}=D(v_3,fy)$. In
particular, this shows that the multiplication in \eqref{eq:mult-invo} is
supercommutative.

Let us compute now the multiplication in $\Der(J)^{[\bar 1,\bar 1]}$ given in
\eqref{eq:mult-invo}. For any $f,g\in Z$:
\[
\begin{split}
D(v_1,fv_2)&\cdot
D(v_1,gv_2)=\bigl[D(\varphi(v_1),\varphi(fv_2)),D(\varphi^2(v_1),\varphi^2(gv_2))\bigr]\\
 &=[D(v_2,fv_3),D(v_3,gv_1)]\\
 &=D\bigl(D(v_2,fv_3)(v_3),gv_1\bigr)+D\bigl(v_3,D(v_2,fv_3)(gv_1)\bigr)\\
 &=-D(fv_2,gv_1)=D(v_1,(fg)v_2),
\end{split}
\]
\[
\begin{split}
D(v_1,fv_2)&\cdot D(v_3,gy)=
 \bigl[D(\varphi(v_1),\varphi(fv_2)),D(\varphi^2(v_3),\varphi^2(gy))\bigr]\\
 &=[D(v_2,fv_3),D(v_2,gy)]\\
 &=D\bigl(D(v_2,fv_3)(v_2),gy\bigr)+D\bigl(v_2,D(v_2,fv_3)(gy)\bigr)\\
 &=D(fv_3,gy)+D(v_2,(\delta(f)g)y_1)\\
 &=-D((gy)f,v_3)=D(v_3,(fg)y)\\
 &\qquad\quad\text{as $D(v_2,Zy_1)=0$ (trivial action on $Z$, $y$, $y_j$,
 $j=1,2,3$),}\\[6pt]
D(v_3,fy)&\cdot D(v_3,gy)
 =\bigl[D(\varphi(v_3),\varphi(fy)),D(\varphi^2(v_3),\varphi^2(gy))\bigr]\\
 &=[D(v_1,fy),D(v_2,gy)]\\
 &=D\bigl(D(v_1,fy)(v_2),gy\bigr)+D\bigl(v_2,D(v_1,fy)(gy)\bigr)\\
 &=D(v_2,(\delta(f)g-f\delta(g))v_1)=-D(v_1,(\delta(f)g-f\delta(g))v_2).
\end{split}
\]

It then follows at once that the map:
\[
\begin{split}
\Phi:K\ &\longrightarrow (\Der(J)^{[\bar 1,\bar 1]},\cdot),\\ f+gy&\mapsto
D(v_1,fv_2)+\sqrt{-1}D(v_3,gy)
\end{split}
\]
is an isomorphism of superalgebras. This concludes the proof of Theorem
\ref{th:coordinate}.
\end{proof}

\bigskip

The symmetric group $S_4$ fixes elementwise the subalgebra $\Der(J)^{[\bar
0,\bar 0]}$ (Proposition \ref{pr:S4fixesDerJ00}), and therefore $\Der(J)^{[\bar
0,\bar 0]}$ acts by derivations on the superalgebra $(\Der(J)^{[\bar 1,\bar
1]},\cdot)$, which is isomorphic to $K$. This induces a homomorphism of Lie
superalgebras:
\[
\begin{split}
\Phi^*:\Der(J)^{[\bar 0,\bar 0]}&\longrightarrow \Der(K),\\
  \partial\ &\mapsto\ \hat\partial:z\mapsto \Phi^{-1}([\partial,\Phi(z)]).
\end{split}
\]

\begin{proposition}\label{pr:DerJ00DerK}
$\Phi^*$ is an isomorphism onto $\overline{\Der}(K)$. It takes
$\Inder(J)^{[\bar 0,\bar 0]}$ onto $\Inder(K)$. Thus, we have
\[
\Der(J)^{[\bar 0,\bar 0]}\cong \overline{\Der}(K),\quad \Inder(J)^{[\bar 0,\bar
0]}\cong \Inder(K).
\]
\end{proposition}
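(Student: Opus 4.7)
The plan is to show that $\Phi^{*}$ coincides, up to a $\sqrt{-1}$ rescaling on the odd component, with the restriction map $\partial\mapsto\partial\vert_{K}$, which Corollary \ref{co:oddDer} has already identified as a linear bijection $\Der(J)^{[\bar 0,\bar 0]}\to\overline{\Der}(K)$.

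First I would check that $\Phi^{*}$ lands in $\Der(K)$ and is a Lie superalgebra homomorphism. By Proposition \ref{pr:S4fixesDerJ00} every $\partial\in\Der(J)^{[\bar 0,\bar 0]}$ commutes by conjugation with the generators $\tau$ and $\varphi$ of $S_{4}$ that enter the product $\cdot$ in \eqref{eq:mult-invo}, so the inner action $[\partial,-]$ is a superderivation of $(\Der(J)^{[\bar 1,\bar 1]},\cdot)$; transporting this via the isomorphism $\Phi$ of Theorem \ref{th:coordinate} yields a derivation $\hat\partial$ of $K$, and the assignment is manifestly $\bF$-linear and bracket-preserving.

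The remaining work is a direct computation on each summand of the decomposition of $\Der(J)^{[\bar 0,\bar 0]}$ provided by Propositions \ref{pr:DerJ0} and \ref{pr:DerJ1}, using the standard identity $[\partial,D(a,b)]=D(\partial a,b)+(-1)^{|\partial||a|}D(a,\partial b)$. For $\partial=\tilde\mu$ with $\mu\in\Der(K)\subo$, equation \eqref{eq:tildepartial} gives $\tilde\mu(v_{i})=0$, $\tilde\mu(fv_{i})=\mu(f)v_{i}$ and $\tilde\mu(gy)=(\mu(g)+ag)y$, from which $[\tilde\mu,\Phi(z)]=\Phi(\mu\vert_{K}(z))$ is immediate, so $\Phi^{*}(\tilde\mu)=\mu$. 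For $\partial=\tilde\eta_{a}\in D(Z,Zy)$, the relation $w_{i}=x_{i}\cdot x$ and the super-Leibniz rule give $\tilde\eta_{a}(v_{i})=-ay_{i}$; a brief expansion shows that $[\tilde\eta_{a},D(v_{1},fv_{2})]\in D(v_{3},Zy)$ vanishes on $v_{3}$ and is therefore zero, whereas $[\tilde\eta_{a},D(v_{3},gy)]=-D(ay_{3},gy)$ is identified, by evaluation on $v_{2}$, with $\Phi(ag)\in D(v_{1},Zv_{2})$. This forces $\Phi^{*}(\tilde\eta_{a})=\sqrt{-1}\,\eta_{a}$.

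These explicit formulas show that $\Phi^{*}$ is an injection onto $\Der(K)\subo\oplus\Inder(K)\subuno=\overline{\Der}(K)$. Combining them with the description $\Inder(J)^{[\bar 0,\bar 0]}=\{\tilde{\check\mu}:\mu\in Z\delta\}\oplus\{\tilde\eta_{a}:a\in Z\}$ read off from Propositions \ref{pr:DerJ0} and \ref{pr:DerJ1}, and with the isomorphism $Z\delta\to\Inder(K)\subo$ of Proposition \ref{pr:DerZZx}, gives $\Phi^{*}(\Inder(J)^{[\bar 0,\bar 0]})=\Inder(K)\subo\oplus\Inder(K)\subuno=\Inder(K)$. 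The main obstacle is the odd case, specifically the vanishing $[\tilde\eta_{a},\Phi(f)]=0$: the two $D$-summands $D(-ay_{1},fv_{2})$ and $D(v_{1},-afy_{2})$ are a priori nonzero elements of $D(v_{3},Zy)$, and their cancellation is established by the uniform trick of evaluating on $v_{3}$, exploiting that a derivation in $D(v_{3},Zy)$ is determined by its value on $v_{3}$.
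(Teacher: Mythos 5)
Your argument is correct and follows essentially the same route as the paper's proof: compute $[\tilde\partial,\Phi(z)]$ and $[\tilde\eta_a,\Phi(z)]$ on the generators supplied by \eqref{eq:tildepartial} and by $\tilde\eta_a(v_i)=-ay_i$, and match the results against the decompositions of Propositions \ref{pr:DerJ0} and \ref{pr:DerJ1} (the paper identifies the resulting inner derivations via the cyclic identity \eqref{eq:cyclic} and the vanishing of $D(v_i,Zy_j)$ for $i\ne j$, where you instead evaluate at $v_2$ resp.\ $v_3$; both work). The only slip is a harmless sign: $[\tilde\eta_a,D(v_3,gy)]=-D(ay_3,gy)=-\Phi(ag)$, so $\Phi^*(\tilde\eta_a)=-\sqrt{-1}\,\eta_a$ rather than $+\sqrt{-1}\,\eta_a$, which does not affect the conclusion since $\sqrt{-1}\,\eta_a=\eta_{\sqrt{-1}\,a}$ still runs over all of $\Inder(K)\subuno$.
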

\begin{proof}
For $\partial\in\overline{\Der}(K)\subo$, let $\tilde\partial$ be the
corresponding derivation of $J$ as in \eqref{eq:tildepartial}, so that
\[
\tilde\partial\vert_{K}=\partial,\quad\tilde\partial(fv_i)=\mu(f)v_i,\quad
\tilde\partial(fy_i)=(\mu(f)-af)y_i,
\]
for any $f\in Z$ and $i=1,2,3$, where $\mu=\partial\vert_Z$ and $a\in Z$ is the
element such that $[\mu,\delta]=2a\delta$, and which satisfies
$\partial(y)=ay$.

Then for any $f\in Z$:
\[
[\tilde\partial,D(v_1,fv_2)]=D(\tilde\partial(v_1),fv_2)+D(v_1,\tilde\partial(fv_2))
=D(v_1,\partial(f)v_2),
\]
because $\tilde\partial(v_i)=0$ for $i=1,2,3$. Hence
$[\tilde\partial,\Phi(f)]=\Phi(\partial(f))$. Also,
\[
[\tilde\partial,D(v_3,fy)]=D(v_3,\tilde\partial(fy))=D(v_3,\partial(fy)),
\]
so $[\tilde\partial,\Phi(fy)]=\Phi(\partial(fy))$. We conclude
$\partial=\Phi^*(\tilde\partial)$.

Now for any $\partial\in\overline{\Der}(K)\subuno$, there is an element $a\in
Z$ such that $\partial=\eta_a$ (Proposition \ref{pr:DerZZx}, recall
$\eta_a(Z)=0$). Thus $\partial$ extends to the odd derivation $\tilde\eta_a$ in
Subsection \ref{sss:tildeetaa} with $\tilde\eta_a(y_i)=0$ for any $i=1,2,3$.
Then $\tilde\eta_a(v_i)=\tilde\eta_a(y_iy)=-y_i\tilde\eta_a(y)=-ay_i$ for
$i=1,2,3$, and for any $f\in Z$:
\[
[\tilde\eta_a,D(v_1,fv_2)]=D(-ay_1,fv_2)+D(v_1,-afy_2)=0
\]
because $D(v_i,Zy_j)=D(Z,Zy_j)=0$ for $i\ne j$ (they act trivially on $Z$, $y$,
$y_k$, $k=1,2,3$). Also,
\[
\begin{split}
[\tilde\eta_a,D(v_3,fy)]&=D(-ay_3,fy),\quad\text{(as $D(v_3,Z)=0$)}\\
  &=-D(v_1(ay_2),fy)\\
  &=D((ay_2)(fy),v_1)-D((fy)v_1,ay_2)\\
  &=D((af)v_2,v_1)\qquad \text{($(Zy)v_1=0$)}\\
  &=-D(v_1,(af)v_2)=-\Phi(af)\\
  &=-\Phi(\eta_a(fy)).
\end{split}
\]
Therefore, since $\Phi(fy)=\sqrt{-1}D(v_3,fy)$, we conclude
$\sqrt{-1}[\tilde\eta_a,\Phi(z)]=\Phi(\eta_a(z))$ for any $z\in K$, or
$\eta_a=\Phi^*(\sqrt{-1}\tilde\eta_a)$. This completes the proof.
\end{proof}

\bigskip


\section{Derivations and the Tits-Kantor-Koecher construction}\label{se:TKK}

Given a Jordan superalgebra $J$ and a subalgebra $\frd$ of $\Der(J)$ such that
$\Inder(J)\leq \frd\leq\Der(J)$, consider the orthogonal Lie algebra $\frso_3$
of skewsymmetric $3\times 3$ matrices over $\bF$, and the vector space
\[
\frg=\bigl( \frso_3\otimes J\bigr)\oplus \frd,
\]
with superanticommutative bracket given by
\begin{itemize}
\item $\frd$ is a subalgebra of $\frg$, \item $[\partial, A\otimes
    x]=A\otimes \partial(x)$, for any $\partial\in\frd$, $A\in\frso_3$ and
    $x\in J$, \item $[A\otimes x,B\otimes y]=[A,B]\otimes xy
    +\frac{1}{2}\trace(AB)D(x,y)$ for any $A,B\in\frso_3$ and $x,y\in J$.
\end{itemize}
Then \cite{Tits62} $\frg$ is a Lie superalgebra. (Actually the work
\cite{Tits62} deals only with the non-super case, but all the arguments there
are valid in the super setting \cite{EO2}.)

This Lie superalgebra $\frg$ thus obtained will be denoted by $\calT(J,\frd)$.
For $\frd=\Inder(J)$ we will write simply $\calT(J)$.

If $\bF$ is a field of prime characteristic $p$, then $\frso_3$ is split, that
is, isomorphic to $\frsl_2$. (One can check this directly, or using that
$\frso_3$ consists of the trace zero elements of the classical quaternion
algebra $\bH=\bF 1\oplus\bF i\oplus\bF j\oplus \bF k$, with $i^2=j^2=-1$, and
$ij=-ji=k$, but the norm of this algebra represents $0$ already over the prime
subfield $\bF_p=\bZ/p\bZ$, and hence it is isomorphic to $\Mat_2(\bF)$.) If
$\bF$ is a field of characteristic $0$ in which $\bH$ is split (for instance,
if $-1\in\bF^2$), then also $\frso_3$ is isomorphic to $\frsl_2$. In this case
take a standard basis $\{e,f,h\}$ of $\frsl_2$ with $[h,e]=2e$, $[h,f]=-2f$ and
$[e,f]=h$. The assignment:
\[
e\otimes x\leftrightarrow x_1,\quad \frac{1}{2}h\otimes x\leftrightarrow
L_x,\quad \frac{1}{2}f\otimes x\leftrightarrow x_{-1},\quad d\leftrightarrow
d,
\]
for $x\in J$ and $d\in \Inder(J)$, provides an isomorphism from $\calT(J)$ onto
$\calK(J)$ (see \ref{ss:TKK}). Therefore, $\calT(J)$ is nothing else but the
Tits-Kantor-Koecher superalgebra of the Jordan superalgebra $J$.

\smallskip

Note that $\frso_3$ has a basis $\{E_1,E_2,E_3\}$ with $[E_i,E_{i+1}]=E_{i+2}$
(indices modulo $3$).

\smallskip

Our last result relates the Lie superalgebra of derivations of the Cheng-Kac
Jordan superalgebra $JCK(Z,\delta)$ to the Tits-Kantor-Koecher superalgebra
attached to the Jordan superalgebra $K=Z\oplus Zx$:

\begin{theorem}\label{th:DerJTKK}
Let $J$ be the Cheng-Kac Jordan superalgebra $JCK(Z,\delta)$ for a unital
commutative and associative algebra $Z$ and a derivation $\delta$ of $Z$ with
$Z=Z\delta(Z)$, over a field $\bF$ with $-1\in\bF^2$.

Then the Lie superalgebra of derivations $\Der(J)$ is isomorphic to the
Tits-Kantor-Koecher superalgebra $\calT(J,\overline{\Der}(K))$, while the Lie
superalgebra of inner derivations $\Inder(J)$ is isomorphic to $\calT(K)$.
\end{theorem}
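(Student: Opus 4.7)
The plan is to assemble an explicit isomorphism $\Psi:\calT(K,\overline{\Der}(K))\to\Der(J)$ from the identifications already in hand, leveraging the $S_4$-symmetry of $\Der(J)$. Proposition~\ref{pr:DerJ00DerK} supplies $(\Phi^*)^{-1}:\overline{\Der}(K)\stackrel{\cong}{\to}\Der(J)^{[\bar 0,\bar 0]}$, taking $\Inder(K)$ onto $\Inder(J)^{[\bar 0,\bar 0]}$, while Theorem~\ref{th:coordinate} supplies the coordinate isomorphism $\Phi:K\stackrel{\cong}{\to}\Der(J)^{[\bar 1,\bar 1]}$. The automorphism $\varphi=(1\,2\,3)$ of $\Der(J)$ cyclically permutes the three non-trivial $\bZ_2^2$-components (sending $[\bar 1,\bar 1]\to[\bar 1,\bar 0]\to[\bar 0,\bar 1]\to[\bar 1,\bar 1]$, as can be read off from its action on the generators $v_i$), so $\Phi$, $\varphi\circ\Phi$ and $\varphi^2\circ\Phi$ furnish $\bF$-linear isomorphisms of $K$ onto each of them. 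Fixing a basis $\{E_1,E_2,E_3\}$ of $\frso_3$ with $[E_i,E_{i+1}]=E_{i+2}$ and with each $E_i$ transforming under $\langle\tau_1,\tau_2\rangle$ in the same character as the $\bZ_2^2$-component it is to index, I would define $\Psi$ on $E_i\otimes K$ by an appropriately normalized version of $\varphi^{i-1}\circ\Phi$, and on $\overline{\Der}(K)$ by $(\Phi^*)^{-1}$.

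The check that $\Psi$ is a Lie superalgebra homomorphism splits into two families of bracket verifications. For the cross-bracket $[\partial,A\otimes k]=A\otimes\partial(k)$: since $S_4$ acts trivially on $\Der(J)^{[\bar 0,\bar 0]}$ (Proposition~\ref{pr:S4fixesDerJ00}), the image $(\Phi^*)^{-1}(\partial)$ commutes with every power of $\varphi$, and so the required identity on each of the three copies of $K$ reduces to the defining property $[(\Phi^*)^{-1}(\partial),\Phi(k)]=\Phi(\partial(k))$ of $\Phi^*$ proved in Proposition~\ref{pr:DerJ00DerK}. For the pure bracket $[E_i\otimes k,E_j\otimes\ell]$ with $i\ne j$, the term $[E_i,E_j]\otimes k\ell=\pm E_m\otimes k\ell$ must match the commutator of two elements taken from distinct non-trivial $\bZ_2^2$-components, landing in the third: after transporting through $\varphi$ this is exactly the supercommutative product $\cdot$ of \eqref{eq:mult-invo}, whose explicit form as the product of $K$ was established in Theorem~\ref{th:coordinate}. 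For $i=j$, the bracket lies in $\Der(J)^{[\bar 0,\bar 0]}$ and I must match $\tfrac{1}{2}\trace(E_i^2)D(k,\ell)$ against the inner derivation obtained from $[\Phi(k),\Phi(\ell)]$; the identity \eqref{eq:cyclic} together with the explicit action of $D(v_i,fv_j)$ and $D(v_3,fy)$ on $K$ (computed in the proof of Theorem~\ref{th:coordinate}) converts this into a statement inside $\Inder(K)\subseteq\overline{\Der}(K)$ that is handled by $(\Phi^*)^{-1}$.

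The second assertion $\Inder(J)\cong\calT(K)=\calT(K,\Inder(K))$ then follows by restriction: Propositions~\ref{pr:DerJ0} and~\ref{pr:DerJ1} show that each of the three non-trivial $\bZ_2^2$-components of $\Der(J)$ already consists entirely of inner derivations, and $(\Phi^*)^{-1}$ identifies $\Inder(K)$ with $\Inder(J)^{[\bar 0,\bar 0]}$, so $\Psi$ restricts to an isomorphism onto the subalgebra $(\frso_3\otimes K)\oplus\Inder(K)$. The main technical obstacle is the bookkeeping for the normalization of $\Psi$ on the three copies of $K$: one must choose the scalars and signs so that the $\frso_3$-structure constants $[E_i,E_{i+1}]=E_{i+2}$ and the trace pairing $\tfrac{1}{2}\trace(E_iE_j)$ are matched exactly by the chosen lifts of $\Phi$. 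Once the normalization is pinned down, the $S_4$-symmetry reduces all remaining Jacobi relations to computations already performed in the proofs of Proposition~\ref{pr:DerJ00DerK} and Theorem~\ref{th:coordinate}.
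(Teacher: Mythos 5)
Your proposal is correct and follows essentially the same route as the paper: identify the three components $\Der(J)^{[\bar 1,\bar 0]},\Der(J)^{[\bar 0,\bar 1]},\Der(J)^{[\bar 1,\bar 1]}$ with $E_i\otimes K$ via $\iota_3=\Phi$, $\iota_1=\varphi\circ\Phi$, $\iota_2=\varphi^2\circ\Phi$, use $\Phi^*$ on the $[\bar 0,\bar 0]$-component, and verify the brackets from Theorem~\ref{th:coordinate}, Proposition~\ref{pr:DerJ00DerK} and Proposition~\ref{pr:S4fixesDerJ00}. The only cosmetic differences are that the normalization you defer turns out to be unnecessary (the coordinate product is defined precisely as $[\varphi(X),\varphi^2(Y)]$, so these lifts work as is), and the paper settles the $i=j$ bracket by the Jacobi identity, deriving $[[\iota_i(z),\iota_i(z')],\iota_{i+1}(z'')]=-\iota_{i+1}\bigl(D(z,z')(z'')\bigr)$ to match $\tfrac12\trace(E_i^2)=-1$, rather than by your appeal to \eqref{eq:cyclic}.
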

\begin{proof}
For any $z\in K$, let $\iota_3(z)=\Phi(z)\in\Der(J)^{[\bar 1,\bar 1]}$ and
$\iota_1(z)=\varphi(\iota_3(z))\in\Der(J)^{[\bar 1,\bar 0]}$ and
$\iota_2(z)=\varphi^2(\iota_3(z))\in\Der(J)^{[\bar 0,\bar 1]}$. Then the fact
that $\Phi$ is an isomorphism is equivalent to the condition
\[
\iota_3(z_1z_2)=[\iota_1(z_1),\iota_2(z_2)],
\]
for any $z_1,z_2\in K$. The action of the automorphism $\varphi$ gives then
\[
\iota_i(z_1z_2)=[\iota_{i+1}(z_1),\iota_{i+2}(z_2)]
\]
for any $i=1,2,3$ (indices modulo $3$) and any $z_1,z_2\in K$.

Following some of the ideas in \cite[\S 7]{EO2}, identify the element
$\iota_i(z)$ with $E_i\otimes z\in \frso_3\otimes K$ )($i=1,2,3$), and identify
$\Der(J)^{[\bar 0,\bar 0]}$ with $\frd=\overline{\Der}(K)$ through the
isomorphisms $\Phi^*$, to get an identification $\Der(J)\simeq (\frso_3\otimes
K)\oplus \frd$.

The Jacobi identity gives
\[
\begin{split}
&[[\iota_i(z),\iota_i(z')],\iota_{i+1}(z'')]\\ &\qquad
=[\iota_i(z),\iota_{i+2}(z'z'')]+(-1)^{\bar z'\bar
z''}[\iota_{i+2}(zz''),\iota_i(z)]\\ &\qquad =-(-1)^{\bar z(\bar z'+\bar
z'')}\iota_{i+1}((z'z'')z)+(-1)^{\bar z'\bar z''}\iota_{i+1}((zz'')z')\\
&\qquad =-\iota_{i+1}(D(z,z')(z'')).
\end{split}
\]
But $\trace(E_i^2)=-2$ for any $i=1,2,3$, and this shows that the
identification $\Der(J)\simeq (\frso_3\otimes K)\oplus \frd$ above is an
isomorphism of Lie superalgebras $\Der(J)\cong\calT(K,\overline{\Der}(K))$,
which restricts to an isomorphism $\Inder(J)\simeq \calT(J)$, because
$\Inder(J)^{[\bar 0,\bar 0]}=D(y,Zy)\oplus D(Z,Zy)$ corresponds through
$\Phi^*$ to $\Inder(K)$.
\end{proof}

%

\bigskip

\end{document}